\newtheoremstyle{my_theoremstyle}
  {5mm}
  {1mm}
  {\it}
  {0pt}
  {\bfseries}
  {{\bf .} }
  {0mm}
  {}
\newtheoremstyle{my_definitionstyle}
  {5mm}
  {1mm}
  {\rm}
  {0pt}
  {\bfseries}
  {{\bf .} }
  {0mm}
  {}
\theoremstyle{my_theoremstyle}
\newtheorem{thm}{Theorem}[section]
\newtheorem{cor}[thm]{Corollary}
\newtheorem{lem}[thm]{Lemma}
\newtheorem{prop}[thm]{Proposition}
\theoremstyle{my_definitionstyle}
\newtheorem{defn}[thm]{Definition}
\newtheorem{rem}[thm]{Remark}
\renewenvironment{proof}{\vspace{-.3cm}\noindent{\it Proof. }}{\vspace{.2cm}\qed}
\renewenvironment{enumerate}
{
\begin{list}{}
{\setlength{\topsep}{-.3cm} \setlength{\parsep}{0cm} \setlength{\itemsep}{.1cm} \setlength{\leftmargin}{1.0cm} \setlength{\labelwidth}{1.0cm}}
}
{\end{list}}
\newcommand{\emails}[1]
{\renewcommand{\thefootnote}{}\footnotetext{#1}
 \renewcommand{\thefootnote}{\arabic{footnote}}}
\numberwithin{equation}{section}
\renewcommand\section{\@startsection{section}{1}{\z@}%
                                  {-5ex \@plus -1ex \@minus -.2ex}%
                                  {2.3ex \@plus.2ex}%
                                  {\normalfont\large\bfseries}}
\renewcommand\subsection{\@startsection{subsection}{1}{\z@}%
                                  {-5ex \@plus -1ex \@minus -.2ex}%
                                  {0.1ex \@plus.2ex}%
                                  {\normalfont\large\it}}
\renewcommand{\emptyset}{\varnothing}
\newcommand{\R}{\ensuremath{\mathbb R}}
\newcommand{\C}{\ensuremath{\mathbb C}}
\newcommand{\N}{\ensuremath{\mathbb N}}
\newcommand{\gperp}{{[\perp]}}
\newcommand{\product}{[\cdot\,,\cdot]}
         \newcommand{\frakB}{\mathfrak B}
\newcommand{\calH}{\mathcal H}
\newcommand{\calK}{\mathcal K}         
\newcommand{\calL}{\mathcal L}         
\newcommand{\calM}{\mathcal M}
\newcommand{\calS}{\mathcal S}
\newcommand{\calX}{\mathcal X}         
\newcommand{\calY}{\mathcal Y}
\newcommand{\la}{\lambda}
\newcommand{\veps}{\varepsilon}
\renewcommand{\Im}{\operatorname{Im}}
\renewcommand{\Re}{\operatorname{Re}}
\renewcommand{\ker}{\operatorname{ker}}
\newcommand{\ran}{\operatorname{ran}}
\newcommand{\sap}{\sigma_{{ap}}}
\renewcommand{\sp}{\sigma_{+}}
\newcommand{\sm}{\sigma_{-}}
\newcommand{\spp}{\sigma_{++}}
\newcommand{\smm}{\sigma_{--}}
\newcommand{\Lra}{\Longrightarrow}
\newcommand{\Llra}{\Longleftrightarrow}
\newcommand{\downto}{\downarrow}
\newcommand{\ol}{\overline}
\newcommand{\ds}{\dotplus}
\newcommand{\wt}{\widetilde}
\newcommand{\cls}{\operatorname{c.l.s.}}
\begin{document}
\thispagestyle{empty}
\begin{center}
\begin{spacing}{1.5}
{\LARGE\bf Locally definite normal operators in Krein spaces}
\end{spacing}

\vspace{0.6cm}
{\Large Friedrich Philipp}

\vspace{.3cm}
{\it Institut f\"ur Mathematik, MA 6-4, Technische Universit\"at Berlin, Stra\ss e des 17.\ Juni 136, 10623 Berlin, Germany}
\end{center}

\vspace{.7cm}\hrule

\vspace{.3cm}
{\bf Abstract}

\vspace{.3cm}
We introduce the spectral points of two-sided positive type of bounded normal operators in Krein spaces. It is shown that a normal operator has a local spectral function on sets which are of two-sided positive type. In addition, we prove that the Riesz-Dunford spectral subspace corresponding to a spectral set which is only of positive type is uniformly positive. The restriction of the operator to this subspace is then normal in a Hilbert space.

\vspace{.3cm}
{\it Keywords:} Krein space, normal operator, local spectral function

\vspace{.4cm}\hrule

\vspace{0.5cm}

\emails{{\it Email address: }{\tt fmphilipp@gmail.com} (F.\ Philipp)}

\setlength{\parskip}{3ex plus 0.5ex minus 0.2ex}

\section{Introduction}
In 1987 J.\ Wu proved in \cite{wj} that for {\it every} bounded linear operator $A$ in a Hilbert space $\calH$ there exists a Krein space $\calK$ and a normal operator $B$ in this Krein space such that $\calH\subset\calK$ and $B|\calH = A$. In other words: every bounded linear operator in a Hilbert space is a "part" of a normal operator in a Krein space. If the Hilbert space $\calH$ is finite-dimensional then the Krein space $\calK$ can even be chosen as $\calH$ itself (see \cite[Proposition 8.1.2]{grl}).

From this point of view it seems desirable to have a profound spectral theory for bounded normal operators in Krein spaces. But the literature on normal operators in Krein spaces is very limited at present and, in addition, in each of the existing contributions global assumptions on the space or the normal operator are imposed. In \cite{ls} the existence of a spectral function for a normal operator in a Pontryagin space was proved and a complete classification of normal operators in a $\Pi_1$-space was worked out. In \cite{x} it is stated without proof that there exists a functional calculus for normal operators in Pontryagin spaces. In \cite{es} the concept of definitizability was extended from selfadjoint operators to a class of normal operators in a Krein space the spectrum of which does not have interior points. For such operators the existence of a spectral function with singularities was proved. Another special class of normal operators with a maximal non-negative invariant subspace was considered in \cite{as}. The papers \cite{aj,b98,b99} and \cite{v} deal with bounded and compact perturbations of fundamentally reducible normal operators.

In contrast to the above-quoted papers very weak assumptions on the normal operator (more precisely, on its real and imaginary part) were imposed in \cite{pst} and the notion of the spectrum of positive and negative type for selfadjoint operators in Krein spaces from \cite{lamm,lmm} was extended to normal operators. It could be shown that a normal operator has a local spectral function on open subsets of $\C$ which are of positive or negative type. This result is known for arbitrary selfadjoint operators in Krein spaces (see \cite{lmm}). But due to the global assumptions the result from \cite{pst} is not a proper generalization of that in \cite{lmm}. However, it shows that the spectrum of positive and negative type is also meaningful for normal operators.

In the present paper we continue the study of the spectral points of positive and negative type for normal operators, but we do not impose any assumptions on the Krein space inner product or the global structure of the operator. As in \cite{pst} it is our main objective to tackle the question whether or when a spectral point $\la$ of positive type of the normal operator $N$ in a Krein space has a neighborhood on which there exists a local spectral function for $N$. We prove that for this it is necessary that $\ol\la$ is a spectral point of positive type of the Krein space adjoint $N^+$ of $N$. This motivates us to introduce the set $\spp(N)$ which consists of all $\la\in\C$ such that $\la\in\sp(N)$ and $\ol\la\in\sp(N^+)$ (here, $\sp(\cdot)$ denotes the spectrum of positive type) and call it the {\it spectrum of two-sided positive type} of $N$. And indeed, we are able to prove that a normal operator has a local spectral function on sets which are of two-sided positive type (see Theorem \ref{t:lsf_pt}). Since for a selfadjoint operator $A$ the sets $\spp(A)$ and $\sp(A)$ coincide, Theorem \ref{t:lsf_pt} is a generalization of the above-mentioned result from \cite{lmm}.

At this point and in light of the results in \cite{pst} the natural question arises whether the sets $\sp(N)$ and $\spp(N)$ coincide for all normal operators. It is proved in Theorem \ref{t:spectral_set} that a spectral set (in the Dunford-Schwartz sense) which is of positive type is in fact of two-sided positive type. This essentially improves a re\-sult from \cite{pst} and shows, in particular, that the part of the operator $N$ corresponding to the spectral set is a normal operator in a Hilbert space. But the question whether $\sp(N) = \spp(N)$ holds in general has to be left open.

The paper is organized as follows. In section \ref{s:spdt} we define the spectral points of positive type and two-sided positive type and prove some of their basic properties. In section \ref{s:lsf} it is shown that a normal operator has a local spectral function on sets of two-sided positive type which is the first main result of this paper. The second main result is proved in section \ref{s:spectral_set} and states that the spectral subspace corresponding to a spectral set of positive type is a Hilbert space with respect to the Krein space inner product. As a consequence of the two main results we prove that $\sp(N) = \spp(N)$ holds in Pontryagin spaces.

\section{Spectral points of definite and two-sided definite type}\label{s:spdt}
In this paper, the Banach algebra of all bounded linear operators mapping from a Banach space $X$ to a Banach space $Y$ will be denoted by $L(X,Y)$. As usual, we set $L(X) := L(X,X)$. The spectrum (resolvent set) of $T\in L(X)$ is denoted by $\sigma(T)$ ($\rho(T)$, respectively).

Recall that for $T\in L(X)$ the {\it approximate point spectrum} $\sap(T)$ of $T$ consists of those $\la\in\sigma(T)$ for which there exists a sequence $(x_n)\subset X$ with $\|x_n\| = 1$, $n\in\N$, and $(T - \la)x_n\to 0$ as $n\to\infty$. Such a sequence is called an {\it approximate eigensequence for $T - \la$}. The points from the set $\C\setminus\sap(T)$ are called {\it points of regular type} of $T$. Note that $\la\notin\sap(T)$ if and only if $T - \la$ is injective and $\ran(T - \la)$ is closed.
%

Throughout this paper, let $(\calH,\product)$ be a Krein space. For the basic properties of (operators in and between) Krein spaces we refer to the monographs \cite{ai} and \cite{b}. We fix a Hilbert space norm $\|\cdot\|$ on $\calH$ such that
$$
\big|[x,y]\big|\,\le\,\|x\|\,\|y\|\quad\text{for all }x,y\in\calH.
$$
Such a norm exists, and all such norms are equivalent, cf.\ \cite{ai,b}. By $T^+$ we denote the adjoint of an operator $T\in L(\calH)$ with respect to the inner product $\product$. The statements of the following lemma will be used frequently without reference, cf.\ \cite[Chapter 2, Theorems 1.11 and 1.16]{ai}.

\begin{lem}\label{l:sap_adj}
Let $T\in L(\calH)$. Then the following statements hold.
\begin{enumerate}
\item[{\rm (i)}]   $\la\in\sigma(T)\;\Llra\;\ol\la\in\sigma(T^+)$.
\item[{\rm (ii)}]  $\la\in\sigma(T)\setminus\sap(T)\;\Lra\;\ol\la\in\sigma_p(T^+)\subset\sap(T^+)$.
\item[{\rm (iii)}] If $\calL$ is a $T$-invariant subspace, then $\calL^\gperp$ is $T^+$-invariant.
\end{enumerate}
\end{lem}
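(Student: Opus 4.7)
The plan is to fix a fundamental symmetry $J$ on $(\calH,\product)$, so that the identification $T^+ = JT^*J$ holds, where $T^*$ denotes the Hilbert adjoint with respect to the inner product $(x,y) := [Jx,y]$. Each of the three parts will then follow from a standard fact about the Hilbert adjoint, transported through $J$, together with the defining property $[Tx,y] = [x,T^+y]$ and the non-degeneracy of $\product$.

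For (i), I would write $T^+ - \ol\la = J(T-\la)^*J$ and use that $J$ is a bijection; this reduces the claim to the classical equivalence $\la\in\sigma(T)\Llra\ol\la\in\sigma(T^*)$ for the Hilbert adjoint.

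For (ii), assume $\la\in\sigma(T)\setminus\sap(T)$, so that $T-\la$ is injective with closed range, and this range must be a proper subspace of $\calH$ (otherwise $\la\in\rho(T)$). Since $J$ sends the Hilbert orthogonal complement of a closed subspace bijectively onto its Krein orthogonal complement, there exists $y\neq 0$ in $\ran(T-\la)^\gperp$. The defining property of the Krein adjoint then gives
$$
0 \,=\, [(T-\la)x,y] \,=\, [x,(T^+-\ol\la)y] \quad\text{for every } x\in\calH,
$$
and non-degeneracy of $\product$ forces $(T^+-\ol\la)y = 0$. Hence $y$ is an eigenvector of $T^+$ at $\ol\la$, and $\sigma_p(T^+)\subset\sap(T^+)$ is immediate.

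For (iii), for arbitrary $y\in\calL^\gperp$ and $x\in\calL$ one has $Tx\in\calL$, so $[x,T^+y] = [Tx,y] = 0$; running $x$ over all of $\calL$ gives $T^+y\in\calL^\gperp$. The lemma is foundational and I expect no genuine obstacle; the only point that merits care is the use in (ii) of the Krein orthogonal complement in place of the Hilbert one, which is handled via the fundamental symmetry.
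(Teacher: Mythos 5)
Your argument is correct: the identification $T^+=JT^*J$ via a fundamental symmetry handles (i), the observation that a proper closed subspace has nontrivial Krein orthogonal companion (being the $J$-image of its nontrivial Hilbert orthogonal complement) gives the eigenvector in (ii), and (iii) is the standard one-line computation. The paper itself gives no proof of this lemma --- it only cites \cite[Chapter 2, Theorems 1.11 and 1.16]{ai} --- and your reasoning is exactly the standard argument one would find there, so there is nothing to compare beyond noting that your write-up correctly isolates the one point needing care, namely passing from the Hilbert to the Krein orthogonal complement in (ii).
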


Hereby, $\calL^\gperp$ denotes the {\it orthogonal companion} of $\calL$ with respect to the inner product $\product$:
$$
\calL^\gperp := \{x\in\calH : [x,\ell] = 0\,\text{ for all }\ell\in\calL\}.
$$
A closed subspace $\calL\subset\calH$ is called {\it uniformly positive} ({\it uniformly negative}) if there exists $\delta > 0$ such that $[x,x]\ge\delta\|x\|^2$ ($-[x,x]\ge\delta\|x\|^2$, respectively) holds for all $x\in\calL$. Equivalently, the inner product space $(\calL,\product)$ ($(\calL,-\product)$, respectively) is a Hilbert space. In this case, we have $\calH = \calL\,[\ds]\,\calL^\gperp$, where $[\ds]$ denotes the direct $\product$-orthogonal sum.

Let us recall the definition of a local spectral function (of positive type) for a bounded operator, cf.\ \cite{lmm}.

\begin{defn}\label{d:lsf_pt}
Let $\calS\subset\C$ be Borel-measurable. By $\frakB(\calS)$ we denote the system of Borel-measur\-able subsets of $\calS$ whose closure is also contained in $\calS$. A mapping $E$ from $\frakB(\calS)$ into the set of all bounded projections in $(\calH,\product)$ is called a {\em local spectral function} for the operator $T\in L(\calH)$ on $\calS$ if for all $\Delta,\Delta_1,\Delta_2,\ldots\in\frakB(\calS)$ the following conditions are satisfied:
\begin{enumerate}
\item[{\rm (S1)}] $E(\Delta_1\cap\Delta_2) = E(\Delta_1)E(\Delta_2)$.
\item[{\rm (S2)}] If $\Delta_1,\Delta_2,\ldots\in\frakB(\calS)$ are mutually disjoint and $\bigcup_{k=1}^\infty\,\Delta_k\in\frakB(S)$, then
$$
E\left(\bigcup_{k=1}^\infty\,\Delta_k\right) = \sum_{k=1}^\infty\,E(\Delta_k),
$$
where the sum converges in the strong operator topology.
\item[{\rm (S3)}] $TB=BT\;\;\Longrightarrow\;\;E(\Delta)B=BE(\Delta)$ \;for every $B\in L(\calH)$.
\item[{\rm (S4)}] $\sigma(T|E(\Delta)\calH)\,\subset\,\ol{\sigma(T)\cap\Delta}$.
\item[{\rm (S5)}] $\sigma(T|(I-E(\Delta))\calH)\,\subset\,\ol{\sigma(T)\setminus\Delta}$.
\end{enumerate}
A local spectral function $E$ for $T$ on $\calS$ is said to be {\em of positive {\rm (}negative{\rm )} type} if for all $\Delta\in\mathfrak B(\calS)$
\begin{enumerate}
\item[{\rm (S6)}] $E(\Delta)\calH$ is uniformly positive {\rm (}uniformly negative, respectively{\rm )}.
\end{enumerate}
\end{defn}

For the rest of this paper let $N$ be a normal operator in $(\calH,\product)$, i.e.\ $N$ commutes with its adjoint,
$$
NN^+ = N^+N.
$$
The spectral points of positive and negative type defined below were first introduced in \cite{lamm} for bounded selfadjoint operators.

\begin{defn}\label{d:sp}
A point $\la\in\sap(N)$ is called a {\em spectral point of positive {\rm (}negative{\rm )} type} of the normal operator $N$ if for every sequence $(x_n)\subset\calH$ with $\|x_n\| = 1$, $n\in\N$, and $(N - \la)x_n\to 0$ as $n\to\infty$ we have
$$
\liminf_{n\to\infty}\,[x_n,x_n] > 0\quad\left(\limsup_{n\to\infty}\,[x_n,x_n] < 0,\;\text{respectively}\!\right)\!.
$$
The set of all spectral points of positive (negative) type of $N$ is denoted by $\sp(N)$ ($\sm(N)$, respectively). A set $\Delta\subset\C$ is said to be of positive (negative) type with respect to $N$ if
$$
\Delta\cap\sap(N)\subset\sp(N)\quad\big(\Delta\cap\sap(N)\subset\sm(N),\;\text{respectively}\big).
$$
A point $\la\in\sap(N)$ is called a {\it spectral point of definite type} of $N$ if it is either a spectral point of positive type or of negative type of $N$. Analogously, a set $\Delta\subset\C$ is said to be of definite type with respect to $N$ if it is either of positive or of negative type with respect to $N$.
\end{defn}

\begin{rem}
The notation of the spectrum of positive type is not consistent in the literature. In some works (see e.g.\ \cite{lamm,l,lmm}) it is denoted as above by $\sp(N)$; in others the authors write $\spp(N)$ instead (see e.g.\ \cite{abjt,pst}). Here we agree to follow \cite{lamm,l,lmm} and use the notation $\sp(N)$. This is also justified by the fact that $\spp(N)$ will be used for the spectral points of two-sided positive type defined below.
\end{rem}

It is immediately seen that, after a slight modification, Definition \ref{d:sp} can be formulated also for unbounded linear operators or relations. In fact, the spectral points of definite type were introduced and studied in \cite{abjt} for closed linear relations in Krein spaces. The following lemma is well-known (see \cite[Lemma 3.1]{abjt}).

\begin{lem}\label{l:sp_open}
The sets $\sp(N)$ and $\sm(N)$ are open in $\sap(N)$.
\end{lem}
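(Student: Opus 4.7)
The plan is to prove that $\sp(N)$ is open in $\sap(N)$ by contradiction, using a standard diagonal selection from a double sequence of approximate eigensequences; the argument for $\sm(N)$ is identical after changing signs.

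Suppose $\la\in\sp(N)$ is not an interior point of $\sp(N)$ relative to $\sap(N)$. Then there exists a sequence $(\la_n)\subset\sap(N)\setminus\sp(N)$ with $\la_n\to\la$. For each fixed $n$, the failure of $\la_n\in\sp(N)$ means that there is an approximate eigensequence $(x_k^{(n)})_{k\in\N}$ for $N-\la_n$ (so $\|x_k^{(n)}\|=1$ and $(N-\la_n)x_k^{(n)}\to 0$ as $k\to\infty$) with $\liminf_{k\to\infty}[x_k^{(n)},x_k^{(n)}]\le 0$.

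Next I would perform a diagonal extraction. By definition of $\liminf$, for every $n$ I can choose an index $k_n$ such that simultaneously
\[
\big\|(N-\la_n)x_{k_n}^{(n)}\big\|<\tfrac1n\qquad\text{and}\qquad [x_{k_n}^{(n)},x_{k_n}^{(n)}]<\tfrac1n.
\]
Setting $y_n:=x_{k_n}^{(n)}$, clearly $\|y_n\|=1$, and
\[
\big\|(N-\la)y_n\big\|\le\big\|(N-\la_n)y_n\big\|+|\la_n-\la|\,\|y_n\|<\tfrac1n+|\la_n-\la|\to 0.
\]
Thus $(y_n)$ is an approximate eigensequence for $N-\la$. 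However $[y_n,y_n]<1/n$ gives $\liminf_n[y_n,y_n]\le 0$, contradicting $\la\in\sp(N)$. Hence $\sp(N)$ is open in $\sap(N)$, and the analogous argument (replacing ``$\liminf>0$'' by ``$\limsup<0$'' and choosing $k_n$ with $[x_{k_n}^{(n)},x_{k_n}^{(n)}]>-1/n$) shows the same for $\sm(N)$.

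The only subtle point is the diagonal selection, which is routine but needs care: one must simultaneously control the approximate-eigenvector defect and the value of the inner product along the sequence. No spectral or normality hypothesis beyond the definition of $\sap(N)$ is needed for the proof — in fact this is exactly why the lemma holds verbatim for arbitrary bounded operators (and even closed linear relations, as in \cite[Lemma 3.1]{abjt}).
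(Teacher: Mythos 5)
Your diagonal argument is correct: the selection of $k_n$ is legitimate because $(N-\la_n)x_k^{(n)}\to 0$ eventually puts the defect below $1/n$, while $\liminf_k[x_k^{(n)},x_k^{(n)}]\le 0$ guarantees infinitely many $k$ with $[x_k^{(n)},x_k^{(n)}]<1/n$, so both conditions can be met simultaneously; the passage from $N-\la_n$ to $N-\la$ and the sign-flipped variant for $\sm(N)$ are also fine. The paper itself gives no proof of this lemma --- it is quoted as well known with a reference to \cite[Lemma 3.1]{abjt} --- and your argument is exactly the standard one behind that citation, valid (as you note) for arbitrary bounded operators without any use of normality.
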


\begin{lem}\label{l:proj}
Let $Q$ be a bounded projection in $\calH$ such that
$$
B\in L(\calH),\;NB=BN\quad\Lra\quad QB=BQ.
$$
Then $Q$ is normal. If, in addition, one of the following conditions
\begin{enumerate}
\item[{\rm (a)}] $\sap(N|Q\calH)\,\subset\,\sp(N)\cup\sm(N)$.
\item[{\rm (b)}] $Q\calH$ is uniformly positive or uniformly negative.
\end{enumerate}
holds, then $Q$ is selfadjoint.
\end{lem}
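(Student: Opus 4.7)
The plan for part (b) is direct. First, applying the commutation hypothesis with $B = N$ and with $B = N^+$ (both admissible, as $NN^+ = N^+N$) yields $NQ = QN$ and $N^+Q = QN^+$. Taking Krein adjoints of these two identities shows that $Q^+$ commutes with $N$, and applying the hypothesis once more with $B = Q^+$ gives $QQ^+ = Q^+Q$; thus $Q$ is normal. Suppose now $Q\calH$ is uniformly positive (the negative case is symmetric). Then $Q\calH$ is a regular Krein subspace and $\calH = Q\calH\,[\ds]\,(Q\calH)^\gperp$. Since $Q\calH$ is both $N$- and $N^+$-invariant, two applications of Lemma~\ref{l:sap_adj}(iii) make $(Q\calH)^\gperp$ invariant under $N^+$ and under $N$. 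Consequently the $\product$-orthogonal projection $P$ onto $Q\calH$ is a bounded selfadjoint projection that commutes with $N$, and the hypothesis applied with $B = P$ gives $PQ = QP$. Since $\ran P = Q\calH = \ran Q$, we have $PQ = Q$ and $QP = P$, whence $Q = P = P^+ = Q^+$.

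For part (a), the plan is to reduce to part (b) by spectrally decomposing $N_0 := N|Q\calH \in L(Q\calH)$. Put $\sigma_+^* := \sap(N_0) \cap \sp(N)$ and $\sigma_-^* := \sap(N_0) \cap \sm(N)$. Because $\sap(N_0) \subset \sap(N)$ and Lemma~\ref{l:sp_open} makes $\sp(N)$, $\sm(N)$ relatively open in $\sap(N)$, the sets $\sigma_\pm^*$ are disjoint and relatively clopen in $\sap(N_0)$; by (a) they cover $\sap(N_0)$, and they lie at positive distance from each other (in disjoint open subsets of $\C$). Using $\partial\sigma(N_0) \subset \sap(N_0)$, I would lift this to a clopen decomposition $\sigma(N_0) = \Sigma_+ \sqcup \Sigma_-$ of compact sets, take the associated Riesz projections $Q_\pm \in L(Q\calH)$, and extend them by zero on $\ker Q$ to projections on $\calH$ satisfying $Q = Q_+ + Q_-$. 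Each such extension inherits the commutation hypothesis: any $B$ commuting with $N$ also commutes with $Q$ (by hypothesis), hence leaves both $Q\calH$ and $\ker Q$ invariant, commutes with $N_0$, and therefore with its Riesz projections. A Weyl-sequence argument using the definition of positive type then shows $Q_+\calH$ is uniformly positive and $Q_-\calH$ uniformly negative, so part (b) applied to each summand gives $Q_\pm = Q_\pm^+$, whence $Q = Q^+$.

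The main obstacle lies entirely in part (a), and has two facets. First, lifting the clopen decomposition of $\sap(N_0)$ to one of $\sigma(N_0)$ requires ruling out that a connected component of $\sigma(N_0) \setminus \sap(N_0)$ has boundary meeting both $\sigma_+^*$ and $\sigma_-^*$; this is where the positive distance between the two pieces together with the connectedness of each component's closure must be used. Second, establishing uniform definiteness of $Q_\pm\calH$ requires producing, from a hypothetical normalized sequence in $Q_+\calH$ whose Krein form is not bounded below by a positive constant, an approximate eigensequence of $N - \la$ with $\la \in \sigma_+^* \subset \sp(N)$, contradicting the definition of $\sp(N)$. Everything else in the proof is bookkeeping from the normality of $Q$ and the regularity of $Q\calH$.
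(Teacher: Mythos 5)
Your argument for normality and for case (b) is correct (for (b) you take a slightly longer route than necessary, via the $\product$-orthogonal projection onto the regular subspace $Q\calH$, but it works). The problems are in case (a), where your plan has two genuine gaps. First, the lifting of the partition $\sap(N_0)=\sigma_+^*\sqcup\sigma_-^*$ to a clopen partition of $\sigma(N_0)$ is not justified: at this stage $Q$ is only known to be normal, so $Q\calH$ need not be a Krein space and $N_0$ is merely a Banach-space operator; nothing prevents a single connected component of $\sigma(N_0)$ --- say an annulus whose interior consists of points where $N_0-\la$ is bounded below but not surjective --- from having its boundary meet both $\sigma_+^*$ and $\sigma_-^*$, and positive distance between the two compact pieces together with connectedness of the component does not exclude this configuration. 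Second, and more seriously, the claim that ``a Weyl-sequence argument'' yields uniform positivity of $Q_+\calH$ is unsubstantiated: from a normalized sequence in an invariant subspace whose Krein form is not bounded below there is no general way to manufacture an approximate eigensequence of $N-\la$. Deducing uniform definiteness of an invariant subspace from the sign type of the approximate point spectrum of the restriction is essentially the content of Theorem \ref{t:spectral_set} (which in turn rests on Theorem \ref{t:lsf_pt}); it cannot be invoked as a routine step inside this early lemma, which the paper itself uses in the proof of Theorem \ref{t:spectral_set}.

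The underlying issue is that you are trying to prove far more than is needed: case (a) does not assert that $Q\calH$ splits into uniformly definite pieces, only that $Q$ is selfadjoint. The paper's argument is short: once $QQ^+=Q^+Q$ is known, $P:=Q-QQ^+$ is again a projection commuting with $N$, and $P^+P=0$, so $P\calH\subset Q\calH$ is a closed neutral $N$-invariant subspace. Every approximate eigensequence of $N|P\calH$ is then neutral, which is incompatible with $\sap(N|P\calH)\subset\sap(N|Q\calH)\subset\sp(N)\cup\sm(N)$ unless $\sap(N|P\calH)=\emptyset$, i.e.\ $P\calH=\{0\}$. Hence $Q=QQ^+$ is selfadjoint; the same $P$ also disposes of case (b) at once, since a neutral subspace of a uniformly definite one is trivial.
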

\begin{proof}
We have (for the second implication apply the adjoint)
$$
NN^+ = N^+N\,\Lra\,QN^+ = N^+Q\,\Lra\,NQ^+ = Q^+N\,\Lra\,QQ^+ = Q^+Q.
$$
Therefore, $Q$ as well as $P := Q - QQ^+$ are normal projections. Moreover, $P$ commutes with $N$, and we have $P^+P = 0$ so that the subspace $P\calH\subset Q\calH$ is neutral. Hence, if (b) holds, then $P=0$ follows immediately. If (a) is satisfied, then we have $\sap(N|P\calH)=\emptyset$ and thus also $P=0$.
\end{proof}

The next theorem was shown in \cite{lmm} in a somewhat more general situation. 

\begin{thm}\label{t:lmm}
Let $A$ be a bounded selfadjoint operator in the Krein space $(\calH,\product)$. If the interval $\Delta$ is of positive {\rm (}negative{\rm )} type with respect to $A$ then $A$ has a local spectral function $E$ of positive type {\rm (}negative type, respectively{\rm )} on $\Delta$. If $\delta\in\mathfrak B(\Delta)$ is compact then $E(\delta)\calH$ is the maximal spectral subspace of $A$ corresponding to $\delta$.
\end{thm}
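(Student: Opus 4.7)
The plan is to prove the positive-type case (the negative-type case being symmetric, e.g.\ by replacing $\product$ with $-\product$). Throughout, assume $\Delta$ is of positive type for $A$. The first task I would undertake is to promote the pointwise condition in Definition \ref{d:sp} to a uniform local estimate: for each compact $\delta\subset\Delta$ there exist $c,\eta>0$ such that whenever $\mu\in\delta$, $x\in\calH$ and $\|(A-\mu)x\|\le\eta\|x\|$, one has $[x,x]\ge c\|x\|^2$. The proof is by contradiction using compactness of $\delta$: if this failed, one could find sequences $\mu_n\in\delta$ and $x_n\in\calH$ with $\|x_n\|=1$, $\|(A-\mu_n)x_n\|\to 0$ and $[x_n,x_n]\to 0$; extracting a subsequence with $\mu_n\to\mu_0\in\delta$ gives $(A-\mu_0)x_n\to 0$, so $\mu_0\in\sap(A)\cap\delta\subset\sp(A)$, contradicting $\liminf[x_n,x_n]>0$.

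Next, for a compact $\delta\in\frakB(\Delta)$, I would define $E(\delta)\calH$ as the maximal spectral subspace
$$
\calL_\delta := \{x\in\calH : z\mapsto(A-z)^{-1}x\ \text{extends analytically from}\ \rho(A)\ \text{to}\ \C\setminus\delta\},
$$
and, following Langer, identify $E(\delta)$ as the strong limit of Riesz-type contour integrals $\frac{1}{2\pi i}\oint_{\Gamma_\veps}(z-A)^{-1}\,dz$ as $\veps\downto 0$, with $\Gamma_\veps$ a closed contour shrinking onto $\delta$ from both half-planes inside a complex neighborhood of $\delta$. The uniform estimate of the first paragraph, combined with the Krein-space identity $((A-z)^{-1})^+=(A-\ol z)^{-1}$ granted by selfadjointness, would provide a resolvent bound $\|(A-z)^{-1}x\|\le C|\Im z|^{-1}\|x\|$ on $A$-invariant cut-off pieces of $\calH$ and hence make the contour limit converge strongly to a bounded projection with range $\calL_\delta$.

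With $E(\delta)$ in hand I would verify (S1)--(S6). Using $\sigma(A|\calL_\delta)\subset\ol{\sigma(A)\cap\delta}\subset\delta$ and a Riesz functional calculus decomposition of unit vectors in $\calL_\delta$, the quantitative estimate from the first paragraph carries over to give $[x,x]\ge c\|x\|^2$ for every $x\in\calL_\delta$; this is condition (S6) and forces $\calL_\delta$ to be uniformly positive. Consequently $\calH=\calL_\delta[\ds]\calL_\delta^\gperp$, and Lemma \ref{l:proj} then implies $E(\delta)=E(\delta)^+$. Properties (S3)--(S5) are standard attributes of the maximal spectral subspace constructed via contour integrals, (S1) for compact $\delta_1,\delta_2$ follows from $\calL_{\delta_1\cap\delta_2}=\calL_{\delta_1}\cap\calL_{\delta_2}$ combined with mutual commutativity of the $E(\delta_i)$, and the final maximality assertion is tautological in this description. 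An arbitrary $\Delta'\in\frakB(\Delta)$ is exhausted by compact subsets $K_n\upto\Delta'$ inside $\Delta$; one sets $E(\Delta')x:=\lim_n E(K_n)x$, and the uniform positivity together with equiboundedness of the $E(K_n)$ delivers $\sigma$-additivity (S2).

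The principal obstacle is the second step, namely strong convergence of the Riesz--Langer contour integral, or equivalently the existence of a bounded $\product$-orthogonal complement of $\calL_\delta$. This is where the local positivity estimate must be upgraded to a genuine two-sided resolvent bound near $\delta$, a passage that relies essentially on selfadjointness of $A$. Once the projection $E(\delta)$ exists and $\calL_\delta$ is known to be uniformly positive, the remaining axioms and the extension to arbitrary Borel subsets are essentially bookkeeping.
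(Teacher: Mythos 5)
The paper does not actually prove Theorem \ref{t:lmm}; it imports it from \cite{lmm}, so the only meaningful comparison is with the Langer--Markus--Matsaev argument itself. Measured against that, your first step is sound and is the same device the paper uses elsewhere (Lemma \ref{l:A_compact}): compactness upgrades the pointwise definition of $\sp(A)$ to a uniform estimate, and from it one does obtain, using selfadjointness and the identity $\big|\Im\,[(A-z)^{-1}x,x]\big|=|\Im z|\,\big|[(A-z)^{-1}x,(A-z)^{-1}x]\big|$, that a complex neighborhood of a compact $\delta$ minus $\R$ lies in $\rho(A)$ with $\|(A-z)^{-1}\|\le C|\Im z|^{-1}$ there. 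Up to that point the outline is correct.

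The genuine gap is the word ``hence'' in your second step. A first-order resolvent bound does not by itself make the shrinking contour integrals converge: on the horizontal legs of $\Gamma_\veps$ the integrand is only $O(\veps^{-1})$ over a segment of fixed length, on the vertical legs the natural majorant $\int_0^\veps C y^{-1}\,dy$ diverges, so there is no absolute convergence; and real spectrum together with first-order resolvent growth is in general not sufficient for the existence of a local spectral function. What makes the argument of \cite{lmm} work is that the positivity is used a \emph{second} time: near $\delta$ the function $z\mapsto[(A-z)^{-1}x,x]$ is a Nevanlinna function up to a summand holomorphic across the real axis, Stieltjes inversion then produces the positive measures $[E(\cdot)x,x]$, and polarization together with the uniform estimate yields bounded, mutually commuting selfadjoint projections. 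That is the actual content of the theorem, not bookkeeping, and you have flagged it as an obstacle rather than resolved it. Two further points presuppose what is to be proved: the manifold $\calL_\delta$ of vectors whose local resolvent extends to $\C\setminus\delta$ is not known to be closed before $E(\delta)$ exists, and transporting the estimate $[x,x]\ge c\|x\|^2$ to all of $\calL_\delta$ by ``a Riesz functional calculus decomposition'' already uses the finer mutually orthogonal projections $E(\delta_i)$ whose existence is exactly the issue. Finally, the endpoint corrections in the Stone-type formula (terms $\tfrac12E(\{a\})$, $\tfrac12E(\{b\})$ when the endpoints of $\delta$ lie in $\sigma(A)$) are nowhere addressed, although they affect (S1) and (S2).
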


Hereby, the {\it maximal spectral subspace} of a bounded operator $T$ in a Banach space $X$ {\it corresponding to the compact set $\Delta\subset\C$} is a closed $T$-invariant subspace $\calL_\Delta\subset X$ such that $\sigma(T|\calL_\Delta)\subset\Delta$ and $\calL\subset\calL_\Delta$ for any closed $T$-invariant subspace $\calL$ with $\sigma(T|\calL)\subset\Delta$. If such a subspace $\calL_\Delta$ exists, it is obviously unique.

In what follows we will deal with the question whether also a {\it normal} operator has a local spectral function of positive type on sets which are of positive type. In the next three lemmas we collect some necessary conditions. The first one is a direct consequence of Lemma \ref{l:proj}. The proof of the second lemma is straightforward and is left to the reader.

\begin{lem}\label{l:proj_sa}
If $N$ has a local spectral function $E$ of positive or negative type on the Borel set $\calS$, then for each $\Delta\in\mathfrak B(\calS)$ the projection $E(\Delta)$ is selfadjoint and commutes with both $N$ and $N^+$.
\end{lem}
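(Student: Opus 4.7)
The plan is to reduce the statement to Lemma \ref{l:proj} applied with $Q := E(\Delta)$, since (S3) and (S6) are essentially tailored to match its hypotheses.

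First I would verify the commutation claims. Since $N$ trivially commutes with itself, property (S3) applied with $B = N$ yields $E(\Delta)N = NE(\Delta)$. Because $N$ is normal, $N^+$ commutes with $N$, so (S3) applied with $B = N^+$ gives $E(\Delta)N^+ = N^+E(\Delta)$. This disposes of the commutation part of the lemma.

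For selfadjointness, I would note that (S3) is precisely the hypothesis of Lemma \ref{l:proj}: the projection $Q = E(\Delta)$ commutes with every $B \in L(\calH)$ that commutes with $N$. By assumption $E$ is of positive or negative type, so by (S6) the range $E(\Delta)\calH$ is uniformly positive or uniformly negative; this is condition (b) of Lemma \ref{l:proj}. Hence Lemma \ref{l:proj} immediately yields that $E(\Delta)$ is selfadjoint.

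There is essentially no obstacle here; the content of the lemma lies in having already set up Lemma \ref{l:proj} and axioms (S3), (S6) in a way that dovetails. The only small point worth mentioning explicitly is that normality of $N$ is used to deduce commutation of $E(\Delta)$ with $N^+$ from (S3) alone, without appealing to any Krein-space adjoint relation.
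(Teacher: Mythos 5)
Your proof is correct and follows exactly the route the paper intends: the paper states that this lemma is a direct consequence of Lemma \ref{l:proj}, and your argument simply spells out that (S3) supplies the commutation hypothesis (with $B=N$ and $B=N^+$, the latter using normality) and (S6) supplies condition (b), whence selfadjointness follows.
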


For a set $\Delta\subset\C$ we define $\Delta^* := \{\ol\la : \la\in\Delta\}$.

\begin{lem}\label{l:lsf_N^+}
If $E$ is a local spectral function of positive {\rm (}negative{\rm )} type for $N$ on the Borel set $\calS$, then $E_+$, defined by
$$
E_+(\Delta) := E(\Delta^*),\quad\Delta\in\frakB(\calS^*),
$$
is a local spectral function of positive type {\rm (}negative type, respectively{\rm )} for $N^+$ on $\calS^*$.
\end{lem}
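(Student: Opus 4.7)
The plan is to verify the axioms (S1)--(S6) for $E_+$ on $\calS^*$ by transferring each directly from the corresponding property of $E$ on $\calS$. The crucial observation is that complex conjugation $\la\mapsto\ol\la$ is an involutive homeomorphism of $\C$, so $\Delta\mapsto\Delta^*$ is a bijection of $\frakB(\calS^*)$ onto $\frakB(\calS)$ that respects Borel structure, countable unions and intersections, complements, and closures. Consequently (S1) and (S2) follow immediately from the definition $E_+(\Delta)=E(\Delta^*)$, and (S6) holds since $E_+(\Delta)\calH=E(\Delta^*)\calH$ is uniformly positive by (S6) applied to $E$.

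For (S3), I would invoke Lemma \ref{l:proj_sa} to conclude $E(\Delta^*)^+=E(\Delta^*)$. If $B\in L(\calH)$ commutes with $N^+$, taking Krein adjoints yields $NB^+=B^+N$, so (S3) for $E$ gives $E(\Delta^*)B^+=B^+E(\Delta^*)$. Adjoining this identity and using selfadjointness of $E(\Delta^*)$ produces $BE_+(\Delta)=E_+(\Delta)B$.

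For (S4) and (S5), set $M:=E(\Delta^*)\calH$ or $M:=(I-E(\Delta^*))\calH$. By Lemma \ref{l:proj_sa}, $E(\Delta^*)$ is a selfadjoint projection commuting with both $N$ and $N^+$, so $M$ is a $\product$-orthogonal direct summand of $\calH$ invariant under $N$ and $N^+$. Since $E(\Delta^*)\calH$ is uniformly positive, we have the decomposition $\calH=E(\Delta^*)\calH\,[\ds]\,(I-E(\Delta^*))\calH$, and in either case $M$ carries an induced Krein space structure on which $N^+|M$ coincides with the Krein-space adjoint of $N|M$. Applying Lemma \ref{l:sap_adj}(i) inside $M$ gives $\sigma(N^+|M)=\sigma(N|M)^*$. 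Combined with (S4) (resp.\ (S5)) for $E$ and the identity $\sigma(N^+)=\sigma(N)^*$, conjugating the resulting set inclusion yields exactly the required bound for $E_+$.

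The main obstacle is no more than the careful handling of (S4)/(S5): one must ensure that $M$ is non-degenerate so that the Krein adjoint on $M$ is well defined, and this is guaranteed by the uniform positivity from (S6). Everything else amounts to conjugating sets in $\C$ and reading off the translated identity, in keeping with the author's remark that the proof is straightforward.
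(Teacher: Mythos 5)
Your proof is correct. The paper gives no proof of this lemma (it is explicitly ``left to the reader'' as straightforward), and your verification of (S1)--(S6) --- conjugation-invariance of the Borel structure for (S1), (S2), (S6); selfadjointness of $E(\Delta^*)$ from Lemma \ref{l:proj_sa} together with adjoining the commutation relation for (S3); and the identification $(N|M)^+ = N^+|M$ on the regular invariant subspaces $E(\Delta^*)\calH$ and $(I-E(\Delta^*))\calH$ followed by Lemma \ref{l:sap_adj}(i) for (S4), (S5) --- is exactly the routine argument the author intends.
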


By $B_r(\la)$ we denote the disk with center $\la\in\C$ and radius $r > 0$.

\begin{lem}\label{l:lsf_nec}
If $N$ has a local spectral function of positive type on the open set $\calS$ then the following statements hold:
\begin{enumerate}
\item[{\rm (a)}] $\calS$ is of positive type with respect to $N$.
\item[{\rm (b)}] $\calS^*$ is of positive type with respect to $N^+$.
\item[{\rm (c)}] $\sap(N)\cap\calS = \sigma(N)\cap\calS$.
\item[{\rm (d)}] $\sap(N^+)\cap\calS^* = \sigma(N^+)\cap\calS^*$.
\item[{\rm (e)}] The approximate eigensequences for $N - \la$ and $N^+ - \ol\la$ coincide for each $\la\in\sap(N)\cap\calS$.
\end{enumerate}
\end{lem}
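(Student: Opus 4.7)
The plan is to prove all five items by a single localization. Given any $\la\in\calS$, I would use the openness of $\calS$ to pick $r>0$ with $\Delta_\la := \ol{B_r(\la)}\subset\calS$ and set $E := E(\Delta_\la)$. By Lemma \ref{l:proj_sa}, $E$ is $\product$-selfadjoint and commutes with both $N$ and $N^+$; so $\calH = E\calH\,[\ds]\,(I-E)\calH$, both summands are $N$- and $N^+$-invariant, $E\calH$ is uniformly positive by (S6), and the $\product$-adjoint of $N|E\calH$ equals $N^+|E\calH$. In particular, $N|E\calH$ is a normal operator on the Hilbert space $(E\calH,\product)$, whose norm is equivalent to $\|\cdot\|$ restricted to $E\calH$. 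Moreover, (S5) together with the choice of $r$ gives
$$
\sigma(N|(I-E)\calH)\subset\ol{\sigma(N)\setminus\Delta_\la}\subset\C\setminus B_r(\la),
$$
hence $\la\notin\sigma(N|(I-E)\calH)$, and by passage to $\product$-adjoints $\ol\la\notin\sigma(N^+|(I-E)\calH)$ as well.

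Statement (a) then falls out as follows. Given an approximate eigensequence $(x_n)$ for $N-\la$ with $\|x_n\|=1$, decompose $x_n=y_n+z_n$ with $y_n:=Ex_n$ and $z_n:=(I-E)x_n$. Since $E$ commutes with $N$, one has $(N-\la)z_n\to 0$, and as $\la$ lies in the resolvent set of $N|(I-E)\calH$ this forces $z_n\to 0$. Because $E\calH$ and $(I-E)\calH$ are $\product$-orthogonal, $[x_n,x_n]=[y_n,y_n]+[z_n,z_n]$, while uniform positivity yields some $\delta>0$ with $[y_n,y_n]\geq\delta\|y_n\|^2$; since $\|y_n\|\to 1$ and $z_n\to 0$, I obtain $\liminf_n[x_n,x_n]\geq\delta>0$, proving $\la\in\sp(N)$.

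For (c), since $E$ commutes with $N$, $\sigma(N)=\sigma(N|E\calH)\cup\sigma(N|(I-E)\calH)$, so any $\la\in\sigma(N)\cap\calS$ must lie in $\sigma(N|E\calH)$; normality of $N|E\calH$ on a Hilbert space gives $\sigma(N|E\calH)=\sap(N|E\calH)$, and approximate eigensequences in $E\calH$ (with respect to either of the two equivalent norms) are also approximate eigensequences in $\calH$, so $\la\in\sap(N)$. For (e), the key input is the Hilbert-space normal identity $\|(N-\la)y\|=\|(N^+-\ol\la)y\|$ valid for $y\in E\calH$: starting from an approximate eigensequence $(x_n)$ for $N-\la$, I would decompose $x_n=y_n+z_n$ as above, observe $(N-\la)y_n\to 0$, use the identity to get $(N^+-\ol\la)y_n\to 0$, and combine with $(N^+-\ol\la)z_n\to 0$ (since $z_n\to 0$) to deduce that $(x_n)$ is an approximate eigensequence for $N^+-\ol\la$. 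The converse direction is symmetric, using $\ol\la\notin\sigma(N^+|(I-E)\calH)$ to obtain $z_n\to 0$ again.

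The remaining items (b) and (d) follow by applying Lemma \ref{l:lsf_N^+}, which produces a local spectral function of positive type for $N^+$ on $\calS^*$, and then invoking (a) and (c) with $N$, $\calS$ replaced by $N^+$, $\calS^*$. I expect the main technical point to be the verification that $N|E\calH$ is normal on the Hilbert space $(E\calH,\product)$, which is precisely what Lemma \ref{l:proj_sa} is designed to deliver; once that is in place, each assertion reduces to standard facts about normal operators on Hilbert spaces.
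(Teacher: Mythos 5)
Your proposal is correct and follows essentially the same route as the paper: localize via $E(\ol{B_r(\la)})$, use Lemma \ref{l:proj_sa} and (S5)/(S6) to split $\calH$ into a uniformly positive $N$-, $N^+$-invariant part and a complement on which $N-\la$ is invertible, and then read off (a), (c), (e) from Hilbert-space normality of the restriction, with (b), (d) obtained from Lemma \ref{l:lsf_N^+}. The only cosmetic difference is that you argue both directions of (e) symmetrically, while the paper proves one direction and derives the other by applying Lemma \ref{l:lsf_N^+} to pass to $N^+$.
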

\begin{proof}
In view of Lemma \ref{l:lsf_N^+} it suffices to show only (a), (c) and that approximate eigensequences of $N - \la$ are also approximate eigensequences of $N^+ - \ol\la$ for $\la\in\sap(N)\cap\calS$.

Let $\la\in\calS\cap\sigma(N)$ and choose $\veps > 0$ such that $B_0 := \ol{B_\veps(\la)}$ is contained in $\calS$. We set $\calL_0 := E(B_0)$, where $E$ is the local spectral function of positive type of $N$ on $\calS$. As $E(B_0)$ is selfadjoint by Lemma \ref{l:proj_sa}, we have $\calL_1 := \calL_0^\gperp = (I - E(B_0))\calH$ and thus $\calH = \calL_0[\ds]\calL_1$. The subspace $\calL_0$ is $N$- and $N^+$-invariant. Hence, the same holds for $\calL_1$. Set $N_j := N|\calL_j$, $j=0,1$.

It follows from (S5) that $\la\in\rho(N_1)$. And as $\sigma(N) = \sigma(N_0)\cup\sigma(N_1)$, we conclude $\la\in\sigma(N_0)$. But $N_0$ is a normal operator in a Hilbert space by (S6) and thus $\la\in\sap(N_0)\,\subset\,\sap(N)$. This shows (c). Let $(x_n)\subset\calH$ be an approximate eigensequence for $N - \la$ and let $(x_{j,n})\subset\calL_j$, $j=0,1$, such that $x_n = x_{0,n} + x_{1,n}$, $n\in\N$. As $\la\in\rho(N_1)$, we conclude from $(N_1 - \la)x_{1,n}\to 0$ that $x_{1,n}\to 0$ as $n\to\infty$. Hence, from the uniform positivity of $\calL_0$ we obtain
$$
\liminf_{n\to\infty}\,[x_n,x_n] = \liminf_{n\to\infty}\,[x_{0,n},x_{0,n}] > 0,
$$
and (a) is proved. Moreover,
\begin{align*}
\|(N^+ - \ol\la)x_n\|
&\le \|(N^+ - \ol\la)x_{0,n}\| + \|(N^+ - \ol\la)x_{1,n}\|\\
&\le \delta\,[(N^+ - \ol\la)x_{0,n},(N^+ - \ol\la)x_{0,n}] + \|N^+ - \ol\la\|\,\|x_{1,n}\|\\
&=   \delta\,[(N - \la)x_{0,n},(N - \la)x_{0,n}] + \|N^+ - \ol\la\|\,\|x_{1,n}\|
\end{align*}
with some $\delta > 0$. This tends to zero as $n\to\infty$.
\end{proof}

The next lemma shows that parts of the necessary conditions in Lemma \ref{l:lsf_nec} are always satisfied for an open set which is of positive type with respect to $N$. By $\calL_\la(T)$ we denote the root subspace of $T\in L(\calH)$ corresponding to $\la\in\C$.

\begin{lem}\label{l:basic}
Let $\la\in\sp(N)$. Then the following statements hold.
\begin{enumerate}
\item[{\rm (i)}]   The approximate eigensequences for $N - \la$ are also approximate eigen\-sequences for $N^+ - \ol{\la}$.
\item[{\rm (ii)}]  $\ol\la\in\sap(N^+)$.
\item[{\rm (iii)}] $\ker(N - \la)\subset\ker(N^+ - \ol\la)$.
\item[{\rm (iv)}]  $\calL_\la(N) = \ker(N - \la)$.
\end{enumerate}
\end{lem}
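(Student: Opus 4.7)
The plan is to prove the four items in sequence, with (i) doing the real work and (ii)--(iv) following as short corollaries.

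For (i), let $(x_n)$ be an approximate eigensequence for $N - \la$ and set $y_n := (N^+ - \ol\la)x_n$. By normality of $N$,
\[
(N - \la)y_n = (N^+ - \ol\la)(N - \la)x_n \to 0,
\]
and, using $(N^+ - \ol\la)^+ = N - \la$ together with normality,
\[
[y_n, y_n] = [x_n,(N - \la)(N^+ - \ol\la)x_n] = [x_n,(N^+ - \ol\la)(N - \la)x_n],
\]
whence $|[y_n,y_n]| \le \|x_n\|\cdot\|N^+ - \ol\la\|\cdot \|(N - \la)x_n\|\to 0$. Suppose now, for contradiction, that $y_n\not\to 0$; after passing to a subsequence one has $\|y_n\|\ge \veps > 0$, and then $z_n := y_n/\|y_n\|$ satisfies $\|z_n\| = 1$, $(N - \la)z_n \to 0$ and $[z_n,z_n] \to 0$. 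Thus $(z_n)$ is an approximate eigensequence for $N - \la$ with vanishing Krein form, contradicting $\la \in \sp(N)$. Hence $y_n \to 0$, which is exactly (i).

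For (ii), the approximate eigensequence $(x_n)$ for $N - \la$ is, by (i), also one for $N^+ - \ol\la$; thus $N^+ - \ol\la$ is not bounded below, and together with $\ol\la \in \sigma(N^+)$ from Lemma~\ref{l:sap_adj}(i) this gives $\ol\la \in \sap(N^+)$. Part (iii) follows at once by applying (i) to the constant sequence $x_n \equiv x/\|x\|$ whenever $0 \ne x \in \ker(N - \la)$.

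For (iv), by a straightforward induction on $n$ it suffices to prove $\ker((N - \la)^2) \subset \ker(N - \la)$. Given $(N - \la)^2 x = 0$, set $y := (N - \la)x\in \ker(N - \la)$; by (iii) then $y \in \ker(N^+ - \ol\la)$, so
\[
[y,y] = [(N - \la)x,y] = [x,(N^+ - \ol\la)y] = 0.
\]
If $y \ne 0$, the constant sequence $y/\|y\|$ would be an approximate eigensequence for $N - \la$ with vanishing Krein form, contradicting $\la \in \sp(N)$; hence $y = 0$ and $x \in \ker(N - \la)$.

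The main obstacle is part (i): one must upgrade the Krein-form vanishing $[y_n,y_n]\to 0$ to the genuine Hilbert-space convergence $y_n\to 0$. The only available leverage is the positive-type hypothesis, exploited by normalizing a hypothetical nonvanishing subsequence of $y_n$ and reading off a contradiction with the uniform positivity of the Krein form along approximate eigensequences. Once (i) is established, (ii)--(iv) come nearly for free.
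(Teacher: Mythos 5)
Your proof is correct and takes essentially the same route as the paper's: for (i) you test the positive-type condition on the normalized sequence $z_n=(N^+-\ol\la)x_n/\|(N^+-\ol\la)x_n\|$, which is exactly the paper's contradiction via $[(N-\la)(N^+-\ol\la)x_{n_k},x_{n_k}]=[(N^+-\ol\la)x_{n_k},(N^+-\ol\la)x_{n_k}]$, and your argument for (iv) is the paper's computation $[y,y]=[x,(N^+-\ol\la)y]=0$ in slightly different notation. No gaps.
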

\begin{proof}
Clearly, (ii) and (iii) follow from (i). So, let us show (i). To this end let $(x_n)\subset\calH$ be an approximate eigensequence for $N - \la$. Then
\begin{equation}\label{e:limit}
(N - \la)(N^+ - \ol\la)x_n\to 0\quad\text{as $n\to\infty$}.
\end{equation}
Suppose that $\limsup_{n\to\infty}\,\|(N^+ - \ol\la)x_n\| > 0$. Then there exists a subsequence $(x_{n_k})$ of $(x_n)$ and $\delta > 0$ such that $\|(N^+ - \ol\la)x_{n_k}\|\to\delta$ as $k\to\infty$. But as $\la\in\sp(N)$, it follows from \eqref{e:limit} that
$$
\liminf_{k\to\infty}\,[(N - \la)(N^+ - \ol\la)x_{n_k},x_{n_k}] = \liminf_{k\to\infty}\,[(N^+ - \ol\la)x_{n_k},(N^+ - \ol\la)x_{n_k}]\,>\,0,
$$
which contradicts \eqref{e:limit}. Therefore, $(N^+ - \ol\la)x_n\to 0$ as $n\to\infty$.

It remains to prove (iv). Let $x,u\in\calH$ such that $(N - \la)x = 0$ and $(N - \la)u = x$. Then (iii) yields $(N^+ - \ol\la)x = 0$ and thus
$$
[x,x] = [(N - \la)u,x] = [u,(N^+ - \ol\la)x] = 0,
$$
which implies $x = 0$ as $\la\in\sp(N)$.
\end{proof}

It follows from Lemmas \ref{l:sp_open} and \ref{l:basic} that the necessary condition (e) in Lemma \ref{l:lsf_nec} for the existence of a local spectral function of positive type for $N$ in an open neighborhood $\calS$ of $\la\in\sp(N)$ is satisfied if only the approximate eigensequences for $N^+ - \ol\la$ are also approximate eigensequences for $N - \la$. Obviously, this is equivalent to the following implication:
\begin{equation}\label{e:imp}
\la\in\sp(N)\quad\Lra\quad\ol\la\in\sp(N^+).
\end{equation}
We return to this problem in section \ref{s:spectral_set} and show there that \eqref{e:imp} is true if $(\calH,\product)$ is a Pontryagin space.

Motivated by Lemma \ref{l:lsf_nec}, we define a new class of spectral points for normal operators.

\begin{defn}
A point $\la\in\C$ is called a {\em spectral point of two-sided positive {\rm (}negative{\rm )} type} of the normal operator $N$ if
\begin{align*}
\la\in\sp(N)\quad &\text{and}\quad\ol\la\in\sp(N^+)\\
\big(\la\in\sm(N)\quad &\text{and}\quad\ol\la\in\sm(N^+),\,\text{ respectively}\big).
\end{align*}
The set of all spectral points of two-sided positive (negative) type of $N$ is denoted by $\spp(N)$ {\rm (}$\smm(N)$, respectively{\rm )}. A set $\Delta\subset\C$ is said to be of two-sided positive {\rm (}negative{\rm )} type with respect to $N$ if
$$
\Delta\cap\sigma(N)\subset\spp(N) \quad\big(\Delta\cap\sigma(N)\subset\smm(N),\;\text{respectively}\big).
$$
\end{defn}

In the sequel we restrict ourselves to the investigation of the spectrum of two-sided positive type. Similar results hold for spectral points and sets of two-sided negative type.

\begin{rem}
In the case of a selfadjoint operator $N$ the sets $\spp(N)$ and $\sp(N)$ coincide and are contained in $\R$ (see, e.g., \cite{lmm}).
\end{rem}

Remark \ref{r:set_tspt} and Lemma \ref{l:basic2} below directly follow from Lemma \ref{l:basic}.

\begin{rem}\label{r:set_tspt}
For a set $\Delta$ we have $\Delta\cap\sigma(N)\subset\spp(N)$ if and only if
$$
\Delta\cap\sap(N)\subset\sp(N)\quad\text{and}\quad\Delta^*\cap\sap(N^+)\subset\sp(N^+).
$$
\end{rem}

\begin{lem}\label{l:basic2}
Let $\la\in\spp(N)$. Then the following holds.
\begin{enumerate}
\item[{\rm (i)}]   The approximate eigensequences for $N - \la$ and $N^+ - \ol\la$ coincide.
\item[{\rm (ii)}]  $\ker(N - \la) = \ker(N^+ - \ol\la) = \calL_\la(N) = \calL_{\ol\la}(N^+)$.
\end{enumerate}
\end{lem}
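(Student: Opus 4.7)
The plan is to simply exploit the symmetry in the definition of $\spp(N)$: the condition $\la\in\spp(N)$ is by definition the conjunction of $\la\in\sp(N)$ and $\ol\la\in\sp(N^+)$, and since $N$ is normal so is $N^+$, with $(N^+)^+ = N$. So each half of Lemma \ref{l:basic2} will be obtained by applying Lemma \ref{l:basic} once to $N$ and once to $N^+$ in place of $N$.

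For statement (i), one direction is immediate: by Lemma \ref{l:basic}(i) applied to $N$ (using $\la\in\sp(N)$), every approximate eigensequence for $N-\la$ is also an approximate eigensequence for $N^+-\ol\la$. For the reverse inclusion, I would apply Lemma \ref{l:basic}(i) to the normal operator $N^+$ at the point $\ol\la\in\sp(N^+)$, whose Krein space adjoint is $N$ and whose corresponding conjugate is $\la$; this yields that every approximate eigensequence for $N^+-\ol\la$ is an approximate eigensequence for $N-\la$.

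For statement (ii), the inclusion $\ker(N-\la)\subset\ker(N^+-\ol\la)$ is Lemma \ref{l:basic}(iii) applied to $N$, and the reverse inclusion $\ker(N^+-\ol\la)\subset\ker(N-\la)$ is the same applied to $N^+$ at $\ol\la$. Finally, $\calL_\la(N)=\ker(N-\la)$ is Lemma \ref{l:basic}(iv) for $N$, and $\calL_{\ol\la}(N^+)=\ker(N^+-\ol\la)$ is Lemma \ref{l:basic}(iv) applied to $N^+$ at $\ol\la$. Combining these four identifications gives the chain of equalities.

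There is no real obstacle here; the only thing to check is that Lemma \ref{l:basic} is genuinely applicable to $N^+$, which holds because $N^+$ is normal whenever $N$ is, and $(N^+)^+=N$, so the hypotheses are symmetric in $N$ and $N^+$. The whole proof is one or two lines of bookkeeping.
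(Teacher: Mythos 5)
Your proof is correct and is exactly the argument the paper intends: the paper states that Lemma \ref{l:basic2} follows directly from Lemma \ref{l:basic}, and the symmetry you exploit (applying Lemma \ref{l:basic} to the normal operator $N^+$ at $\ol\la\in\sp(N^+)$, using $(N^+)^+=N$) is the precise content of that remark.
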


Note that for each $\la\in\C$ the operator
$$
A(\la) := (N^+ - \ol\la)(N - \la)
$$
is selfadjoint (in the Krein space $(\calH,\product)$). The following lemma shows that the spectrum of two-sided positive type of $N$ is closely related to the sign type behaviour of the zero point with respect to the operators $A(\la)$. This correspondence will serve as the starting point for the construction of the local spectral function in section \ref{s:lsf}.

\begin{lem}\label{l:selfadjoint}
For all $\la\in\C$ we have
$$
\la\in\spp(N)\quad\Llra\quad 0\in\sp(A(\la)).
$$
\end{lem}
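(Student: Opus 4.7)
The plan is to prove both directions via the normality identity $A(\la)=(N-\la)(N^+-\ol\la)=(N^+-\ol\la)(N-\la)$ together with the elementary computation $[A(\la)x,x]=[(N-\la)x,(N-\la)x]=[(N^+-\ol\la)x,(N^+-\ol\la)x]$. These let one pass between approximate eigensequences of the three operators $N-\la$, $N^+-\ol\la$ and $A(\la)$, and so reduce the sign-type question for $A(\la)$ at $0$ to the two sign-type conditions that define $\spp(N)$.

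For ``$\Rightarrow$'' suppose $\la\in\spp(N)$. Every approximate eigensequence $(u_n)$ of $N-\la$ is, by Lemma~\ref{l:basic}(i) applied to $N$ at $\la$, also an approximate eigensequence of $N^+-\ol\la$, so $A(\la)u_n\to 0$; this gives $0\in\sap(A(\la))$. To establish the positivity condition, let $(u_n)$ be an arbitrary approximate eigensequence for $A(\la)$. The basic identity above yields $[(N-\la)u_n,(N-\la)u_n]=[A(\la)u_n,u_n]\to 0$. I would then split on $\|(N-\la)u_n\|$: if $(N-\la)u_n\to 0$ in norm, then $(u_n)$ is an approximate eigensequence for $N-\la$, and $\la\in\sp(N)$ gives $\liminf[u_n,u_n]>0$. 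Otherwise, along a subsequence with $\|(N-\la)u_n\|\ge c>0$, the normalized vectors $v_n:=(N-\la)u_n/\|(N-\la)u_n\|$ form an approximate eigensequence of $N^+-\ol\la$ with $[v_n,v_n]\to 0$, contradicting $\ol\la\in\sp(N^+)$.

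For ``$\Leftarrow$'' assume $0\in\sp(A(\la))$. Applying the same dichotomy to an approximate eigensequence of $A(\la)$ shows that either $\la\in\sap(N)$ or $\ol\la\in\sap(N^+)$, and by symmetry in $N$ and $N^+$ I may assume the former. Every approximate eigensequence $(x_n)$ for $N-\la$ then satisfies $A(\la)x_n\to 0$ by boundedness of $N^+-\ol\la$, so is an approximate eigensequence of $A(\la)$; the hypothesis yields $\liminf[x_n,x_n]>0$, whence $\la\in\sp(N)$. Lemma~\ref{l:basic}(ii) then delivers $\ol\la\in\sap(N^+)$, and the analogous argument---using $A(\la)=(N-\la)(N^+-\ol\la)$ to convert approximate eigensequences of $N^+-\ol\la$ into ones for $A(\la)$---gives $\ol\la\in\sp(N^+)$.

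The crux in both directions is the subsequence case in which the norm of $(N-\la)u_n$ stays bounded away from zero: the normalization trick then manufactures an approximate eigensequence for the conjugate factor $N^+-\ol\la$, and it is precisely the second (``two-sided'') half of the hypothesis---together with Lemma~\ref{l:basic}, which bridges approximate eigensequences of $N-\la$ and $N^+-\ol\la$---that closes the loop and rules out the unwanted sign behaviour.
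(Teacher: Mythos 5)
Your proof is correct and follows essentially the same route as the paper: both directions hinge on the identity $[A(\la)x,x]=[(N-\la)x,(N-\la)x]=[(N^+-\ol\la)x,(N^+-\ol\la)x]$ and the resulting dichotomy on $\|(N-\la)u_n\|$ for an approximate eigensequence of $A(\la)$. You merely make explicit (via the normalization $v_n$) a subsequence argument that the paper leaves implicit; the substance is identical.
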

\begin{proof}
Let $\la\in\spp(N)$. Then, clearly, $0\in\sap((N^+ - \ol\la)(N - \la))$. Let $(x_n)$ be an approximate eigensequence for $(N^+ - \ol\la)(N - \la)$. Suppose that there exists a subsequence $(x_{n_k})$ of $(x_n)$ such that $\lim_{k\to\infty}\,\|(N - \la)x_{n_k}\| > 0$. Then from $\ol\la\in\sp(N^+)$ we obtain a contradiction:
$$
0 = \liminf_{k\to\infty}\,[(N^+ - \ol\la)(N - \la)x_{n_k},x_{n_k}] = \liminf_{k\to\infty}\,[(N - \la)x_{n_k},(N - \la)x_{n_k}] > 0.
$$
Therefore, $(N - \la)x_n\to 0$ as $n\to\infty$, and thus $\liminf_{n\to\infty}\,[x_n,x_n] > 0$ as $\la\in\sp(N)$. Conversely, assume that $0\in\sp((N^+ - \ol\la)(N - \la))$. Then $\la\in\sap(N)$ or $\ol\la\in\sap(N^+)$. Assume, e.g., that $\la\in\sap(N)$ and let $(x_n)$ be an approximate eigensequence for $N - \la$. Then $(x_n)$ is also an approximate eigensequence for $(N^+ - \ol\la)(N - \la)$ and thus $\liminf_{n\to\infty}\,[x_n,x_n] > 0$ follows. Hence, $\la\in\sp(N)$ and therefore $\ol\la\in\sap(N^+)$ by Lemma \ref{l:basic}. A similar reasoning as above shows $\ol\la\in\sp(N^+)$.
\end{proof}

For a compact set $K\subset\C$ and $\veps > 0$ we set
$$
B_\veps(K) := \bigcup_{\la\in K}\,B_\veps(\la).
$$

\begin{lem}\label{l:A_compact}
Let $K\subset\C$ be a compact set which is of two-sided positive type with respect to $N$, i.e.
$$
K\cap\sigma(N)\,\subset\,\spp(N).
$$
Then there exist $\veps_0,\delta_0 > 0$ such that for all $\mu\in [0,\veps_0^2]$, all $\la\in\ol{B_{\veps_0}(K)}$ and all $x\in\calH$ the following implications hold:
\begin{enumerate}
\item[{\rm (a)}] $\|(A(\la) - \mu)x\|\le\veps_0\|x\|\quad\Lra\quad [x,x]\ge\delta_0\|x\|^2$.
\item[{\rm (b)}] $\|(N - \la)x\|\le\veps_0\|x\|\;\;\text{or}\;\;\|(N^+ - \ol\la)x\|\le\veps_0\|x\|\quad\Lra\quad [x,x]\ge\delta_0\|x\|^2$.
\end{enumerate}
In particular,
$$
\ol{B_{\veps_0}(K)}\cap\sigma(N)\,\subset\,\spp(N),
$$
and for all $\la\in\ol{B_{\veps_0}(K)}$ we have
$$
[0,\veps_0^2]\cap\sigma(A(\la))\,\subset\,\sp(A(\la)).
$$
\end{lem}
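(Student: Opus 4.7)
The plan is to prove both (a) and (b) by a compactness-plus-contradiction argument, and then derive the two ``in particular'' statements as consequences. What makes the lemma nontrivial is the uniformity of $\veps_0,\delta_0$ over the whole tube $\ol{B_{\veps_0}(K)}$ and over the parameter $\mu\in[0,\veps_0^2]$; this is exactly what compactness of $K$ will deliver.

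For (a), suppose the implication fails. Then one can extract sequences $\veps_n\downto 0$, $\delta_n\downto 0$, points $\mu_n\in[0,\veps_n^2]$, $\la_n\in\ol{B_{\veps_n}(K)}$ and unit vectors $x_n$ with
$$
\|(A(\la_n)-\mu_n)x_n\|\,\le\,\veps_n \quad\text{but}\quad [x_n,x_n]\,<\,\delta_n.
$$
Since $d(\la_n,K)\le\veps_n\to 0$ and $K$ is compact, a subsequence of $(\la_n)$ converges to some $\la_0\in K$. Expanding $A(\la)=N^+N-\ol\la N-\la N^++|\la|^2$ shows that $\la\mapsto A(\la)$ is norm-continuous, so $\|A(\la_0)x_n\|\le\|(A(\la_n)-\mu_n)x_n\|+\|A(\la_0)-A(\la_n)\|+\mu_n\to 0$. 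Hence $0\in\sap(A(\la_0))$; in particular, $A(\la_0)$ is not invertible, which forces $\la_0\in\sigma(N)$ (otherwise $N-\la_0$ and $N^+-\ol\la_0$ would both be invertible). Thus $\la_0\in K\cap\sigma(N)\subset\spp(N)$, and by Lemma \ref{l:selfadjoint}, $0\in\sp(A(\la_0))$. Applying this to the approximate eigensequence $(x_n)$ yields $\liminf[x_n,x_n]>0$, which contradicts $[x_n,x_n]<\delta_n\to 0$.

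For (b), the argument is the same in structure but shorter. If (b) fails, I extract $\veps_n\downto 0$, $\delta_n\downto 0$, $\la_n\in\ol{B_{\veps_n}(K)}$ and unit $x_n$ with, say, $\|(N-\la_n)x_n\|\le\veps_n$ and $[x_n,x_n]<\delta_n$. Passing to a convergent subsequence $\la_n\to\la_0\in K$ gives $\|(N-\la_0)x_n\|\to 0$, so $\la_0\in\sap(N)\cap K\subset\sp(N)$, which forces $\liminf[x_n,x_n]>0$, a contradiction. The case in which $\|(N^+-\ol\la_n)x_n\|\le\veps_n$ is treated identically, using $\ol\la_0\in\sp(N^+)$ (which is part of $\la_0\in\spp(N)$).

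Finally, I deduce the two ``in particular'' clauses. For the first, let $\la\in\ol{B_{\veps_0}(K)}\cap\sigma(N)$. If $\la\in\sap(N)$, then any approximate eigensequence for $N-\la$ eventually satisfies the hypothesis of (b), so $\la\in\sp(N)$; by Lemma \ref{l:basic}(ii), $\ol\la\in\sap(N^+)$, and applying (b) on the $N^+$ side gives $\ol\la\in\sp(N^+)$, so $\la\in\spp(N)$. If instead $\la\in\sigma(N)\setminus\sap(N)$, then by Lemma \ref{l:sap_adj}(ii) one has $\ol\la\in\sap(N^+)$; (b) yields $\ol\la\in\sp(N^+)$, and then Lemma \ref{l:basic}(ii) applied to the normal operator $N^+$ at $\ol\la$ gives $\la\in\sap(N)$, a contradiction. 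For the second clause, fix $\la\in\ol{B_{\veps_0}(K)}$ and let $\mu\in[0,\veps_0^2]\cap\sigma(A(\la))$. Since $A(\la)$ is selfadjoint in $(\calH,\product)$ and $\mu$ is real, Lemma \ref{l:sap_adj}(ii) yields $\mu\in\sap(A(\la))$; now (a) applied to any approximate eigensequence delivers $\mu\in\sp(A(\la))$.

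The main obstacle is the first step of (a): one has to verify that the limit point $\la_0$ actually lies in $\sigma(N)$ so that the hypothesis $K\cap\sigma(N)\subset\spp(N)$ can be invoked, and one has to handle the $\la$-dependent family $A(\la)$ uniformly via norm-continuity. Everything else is a fairly mechanical application of Lemmas \ref{l:basic}, \ref{l:sap_adj}, and \ref{l:selfadjoint}.
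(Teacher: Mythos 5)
Your argument is correct and follows essentially the same route as the paper: the uniform constants in (a) and (b) are obtained by the same compactness-plus-contradiction argument, using norm-continuity of $\la\mapsto A(\la)$ to pass to a limit point $\la_0\in K\cap\sigma(N)\subset\spp(N)$ and then invoking Lemma \ref{l:selfadjoint}. You additionally write out the deduction of the two ``in particular'' clauses, which the paper leaves to the reader; your case analysis there (handling $\sigma(N)\setminus\sap(N)$ via Lemma \ref{l:sap_adj}(ii)) is sound.
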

\begin{proof}
Assume that it is not true that there are $\veps_1,\delta_1 > 0$ such that for all $(\mu,\la,x)\in [0,\veps_1^2]\times\ol{B_{\veps_1}(K)}\times\calH$ we have
$$
\|(A(\la) - \mu)x\|\le\veps_1\|x\|\quad\Lra\quad [x,x]\ge\delta_1\|x\|^2.
$$
Then for each $n\in\N$ there exist $\mu_n\in [0,\frac{1}{n^2}]$, $\la_n\in\ol{B_{\frac 1 n}(K)}$ and $x_n\in\calH$ with $\|x_n\| = 1$ such that $\|(A(\la_n) - \mu_n)x_n\|\le\frac 1 n$ and $[x_n,x_n] < \frac 1 n$. As $\ol{B_1(K)}$ is compact and $\la_n\in \ol{B_1(K)}$ for all $n\in\N$, there exists a subsequence $(\la_{n_k})$ of $(\la_n)$ which converges to some $\la_0\in\ol{B_1(K)}$. But $\la_{n_k}\in\ol{B_{\frac 1{n_k}}(K)}$ so that $\la_0\in K$. It follows that
\begin{align*}
\|A(\la_0)x_{n_k}\|
&\le\|(A(\la_0) - A(\la_{n_k}))x_{n_k}\| + \|(A(\la_{n_k}) - \mu_{n_k})x_{n_k}\| + |\mu_{n_k}|\\
&\le\|A(\la_0) - A(\la_{n_k})\| + \frac 1 {n_k} + \frac 1 {n_k^2}\,.
\end{align*}
Since the function $A : \C\to L(\calH)$ is continuous, it follows that $A(\la_0)x_{n_k}\to 0$ as $k\to\infty$. This implies $\la_0\in\sigma(N)$ and hence $\la_0\in\spp(N)$. By Lemma \ref{l:selfadjoint}, $0\in\sp(A(\la_0))$ which is a contradiction to $[x_n,x_n] < \frac 1 n$.

In a similar way it can be shown that there exist $\veps_2,\delta_2 > 0$ such that for all $(\la,x)\in \ol{B_{\veps_2}(K)}\times\calH$ we have
$$
\|(N - \la)x\|\le\veps_2\|x\|\;\;\text{or}\;\;\|(N^+ - \ol\la)x\|\le\veps_2\|x\|\quad\Lra\quad [x,x]\ge\delta_2\|x\|^2.
$$
With $\veps_0 := \min\{\veps_1,\veps_2\}$ and $\delta_0 := \min\{\delta_1,\delta_2\}$ the assertion follows.
\end{proof}

\begin{cor}\label{c:spp_open}
The set $\spp(N)$ is open in $\sigma(N)$.
\end{cor}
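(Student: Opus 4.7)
The plan is very short because the hard work has just been done in Lemma \ref{l:A_compact}. The statement we want is that every $\la_0 \in \spp(N)$ admits a neighborhood (in $\C$) whose intersection with $\sigma(N)$ still lies in $\spp(N)$.

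I would simply apply Lemma \ref{l:A_compact} to the singleton $K := \{\la_0\}$. This set is compact, and the hypothesis $K \cap \sigma(N) \subset \spp(N)$ holds trivially since $\la_0 \in \spp(N) \subset \sigma(N)$. The lemma then furnishes an $\veps_0 > 0$ with
\[
\ol{B_{\veps_0}(\la_0)} \cap \sigma(N) \,\subset\, \spp(N),
\]
so in particular $B_{\veps_0}(\la_0)$ is an open neighborhood of $\la_0$ in $\C$ whose intersection with $\sigma(N)$ lies in $\spp(N)$. Since $\la_0$ was arbitrary, $\spp(N)$ is open in $\sigma(N)$.

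There is no real obstacle here; the content of the corollary is packaged inside the preceding lemma, and this corollary just reads it off for $K$ a single point. (Alternatively one could argue via Lemma \ref{l:selfadjoint}: openness of $\spp(N)$ in $\sigma(N)$ corresponds via $\la \mapsto A(\la)$ to the openness of the set of $\la$ for which $0 \in \sp(A(\la))$, which would in turn follow from the openness of $\sp(\cdot)$ stated in Lemma \ref{l:sp_open} combined with continuous dependence of $A(\la)$ on $\la$. But this route is strictly longer than the direct appeal to Lemma \ref{l:A_compact}.)
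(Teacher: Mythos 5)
Your proof is correct and is exactly the intended argument: the paper offers no separate proof because the corollary is meant to be read off from Lemma \ref{l:A_compact} applied to the compact set $K=\{\la_0\}$, whose ``in particular'' conclusion $\ol{B_{\veps_0}(K)}\cap\sigma(N)\subset\spp(N)$ gives the required neighborhood. Nothing to add.
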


\section{The local spectral function}\label{s:lsf}
The main result of this section is the following theorem.

\begin{thm}\label{t:lsf_pt}
Let $N$ be a bounded normal operator in the Krein space $(\calH,\product)$ and let $\calS\subset\C$ be a Borel set which is of two-sided positive type with respect to $N$. Then $N$ has a local spectral function $E$ of positive type on $\calS$. If $\Delta\in\mathfrak B(\calS)$ is compact, then $E(\Delta)\calH$ is the maximal spectral subspace of $N$ corresponding to $\Delta$.
\end{thm}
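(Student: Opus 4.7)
The plan is to build $E(\Delta)$ locally by reducing to the selfadjoint spectral theory for the auxiliary operators $A(\la)=(N^+-\ol\la)(N-\la)$ via Lemma \ref{l:selfadjoint}, and then to the Hilbert-space functional calculus for normal operators on the uniformly positive subspaces that this produces. I would first treat the case of a compact $\Delta\in\mathfrak B(\calS)$; the extension to arbitrary Borel $\Delta\in\mathfrak B(\calS)$ by $\sigma$-additivity is then a routine monotone-class argument, so the content of the theorem lies in the compact case.

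Fix such a compact $\Delta$. Applying Lemma \ref{l:A_compact} to $K=\ol\Delta$ produces $\veps_0,\delta_0>0$ such that $[0,\veps_0^2]\cap\sigma(A(\la))\subset\sp(A(\la))$ for every $\la\in\ol{B_{\veps_0}(K)}$, i.e.\ the interval $[0,\veps_0^2]$ is of positive type with respect to the Krein-selfadjoint operator $A(\la)$. Theorem \ref{t:lmm} then yields a local spectral function $F_\la$ of positive type for $A(\la)$ on $[0,\veps_0^2]$. For $r\in(0,\veps_0)$ I set $P_\la^r:=F_\la([0,r^2])$. By Lemma \ref{l:proj_sa} this is a Krein-selfadjoint projection and, by axiom (S3) for $F_\la$, it commutes with every operator that commutes with $A(\la)$, in particular with $N$ and $N^+$. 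Hence $\calH_\la^r:=P_\la^r\calH$ is uniformly positive, i.e.\ a Hilbert space in $\product$, on which $N$ restricts to a bounded normal operator in the Hilbert sense. The maximality clause of Theorem \ref{t:lmm} gives $\sigma(A(\la)|\calH_\la^r)\subset[0,r^2]$, and the Hilbert spectral mapping $\sigma(A(\la)|\calH_\la^r)=\{|z-\la|^2:z\in\sigma(N|\calH_\la^r)\}$ then implies $\sigma(N|\calH_\la^r)\subset\ol{B_r(\la)}$.

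Next I cover $\ol\Delta$ by finitely many closed disks $D_j=\ol{B_{r_j}(\la_j)}$ with $\la_j\in\ol\Delta$ and $r_j<\veps_0$, and set $P_j:=P_{\la_j}^{r_j}$. The $P_j$ mutually commute (each $P_k$ commutes with $N$ and $N^+$, hence with every $A(\la_j)$, hence with $F_{\la_j}(\cdot)$ by (S3)), so $Q:=I-\prod_{j=1}^n(I-P_j)$ is a well-defined Krein-selfadjoint projection commuting with $N$, with uniformly positive range satisfying $\sigma(N|Q\calH)\subset\bigcup_j D_j$. Let $E^Q$ be the Hilbert-space spectral measure of $N|Q\calH$ and define $E(\Delta):=E^Q(\Delta)$, extended by zero on $(I-Q)\calH$. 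Independence of the cover is verified by comparing two covers against a common refinement and invoking uniqueness of the Hilbert spectral measure on the corresponding invariant subspace. Axioms (S1)--(S3) and (S6) then follow from the properties of $E^Q$ and the commutation relations, while (S4)--(S5) reduce, via $\calH=Q\calH\,[\ds]\,(I-Q)\calH$, to the Hilbert spectral inclusions on $Q\calH$ together with the observation that $(I-Q)\calH\subset\bigcap_j(I-P_j)\calH$ is $A(\la_j)$-invariant with spectrum bounded away from $[0,r_j^2]$, so that after choosing the $r_j$ slightly larger than the covering radii no $\la\in\Delta$ lies in $\sigma(N|(I-Q)\calH)$.

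The main obstacle will be the maximal spectral subspace claim: given a closed $N$-invariant $\calL$ with $\sigma(N|\calL)\subset\Delta$, one must prove $\calL\subset Q\calH$, after which $\calL\subset E(\Delta)\calH$ is immediate from the Hilbert-space spectral-subspace property of $E^Q$. Lemma \ref{l:basic2}(i) is the key tool here: on $\spp(N)$ the approximate eigensequences for $N-\la$ and for $N^+-\ol\la$ coincide, which lets one upgrade the $N$-invariance of $\calL$ to enough control over $A(\la_j)|\calL$ to apply the maximality clause of Theorem \ref{t:lmm} to each $A(\la_j)$, concluding $\calL\subset P_j\calH$ for each $j$ whose disk $D_j$ meets $\sigma(N|\calL)$, and hence $\calL\subset Q\calH$ by an induction over the finite cover.
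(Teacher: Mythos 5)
Your construction follows the paper's route in outline (pass to $A(\la)=(N^+-\ol\la)(N-\la)$, apply Theorem \ref{t:lmm}, cover $\ol\Delta$ by finitely many disks, combine the commuting selfadjoint projections, and use the Hilbert-space spectral measure of $N$ restricted to the resulting uniformly positive subspace), but it has two genuine gaps, and they sit exactly where the paper has to work hardest. First, axiom (S3) demands $E(\Delta)B=BE(\Delta)$ for \emph{every} $B$ with $NB=BN$. All the commutation you establish comes from (S3) for the local spectral functions of the operators $A(\la_j)$, which only yields that $Q$ commutes with operators commuting with $A(\la_j)$; and $BN=NB$ does not imply $BA(\la_j)=A(\la_j)B$ unless $B$ also commutes with $N^+$. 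So your argument proves (S3) only for operators commuting with both $N$ and $N^+$. The paper closes this gap with Rosenblum's Corollary (Lemma \ref{l:rosenblum}): it constructs a decreasing family of spectral subspaces $\calL_n$ for slightly enlarged sets $\Delta_n\downto\Delta_0$ with $\calL_0=\bigcap_n\calL_n$, shows $\sigma(N|\calL_n^\gperp)\cap\sigma(N|\calL_0)=\emptyset$ (the enlargement is essential, since $\sigma(N|\calL_0^\gperp)$ and $\sigma(N|\calL_0)$ may touch on $\partial\Delta_0$), and deduces $B\calL_0\subset\calL_n$ for all $n$ from the intertwining relation. Nothing in your proposal substitutes for this step.

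Second, the maximality claim. Given a closed $N$-invariant $\calL$ with $\sigma(N|\calL)\subset\Delta$, you propose to conclude $\calL\subset P_j\calH$ for each $j$ whose disk meets $\sigma(N|\calL)$. That is false as stated: if $\sigma(N|\calL)$ meets $D_1$ and $D_2$ but lies in neither, then $\calL\not\subset P_1\calH$ because $\sigma(N|P_1\calH)\subset D_1$. Repairing it would require splitting $\calL$ along the cover, which presupposes a spectral decomposition of $N|\calL$; but $\calL$ is only known to be $N$-invariant --- it is not known to be $A(\la_j)$-invariant, $N^+$-invariant, or a Hilbert space --- so neither the maximality clause of Theorem \ref{t:lmm} nor Lemma \ref{l:basic2}(i) applies to it, and coincidence of approximate eigensequences does not produce invariance. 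Here again the paper's tool is Rosenblum's Corollary applied to $\bigl(N|\calL_n^\gperp\bigr)\bigl((I-E_n)|\calM\bigr)=\bigl((I-E_n)|\calM\bigr)\bigl(N|\calM\bigr)$ with disjoint spectra, forcing $\calM\subset\calL_n$ for every $n$ and hence $\calM\subset\calL_0$. You need this (or an equivalent) both for the maximal-spectral-subspace statement and for your own independence-of-the-cover step, which as written is circular. The remaining items (uniform positivity of the sum of the ranges, (S4)--(S5), the passage from compact to open $\calS$) match the paper and are fine modulo routine details.
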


As $\spp(N)$ is open in $\sigma(N)$, it is sufficient to prove Theorem \ref{t:lsf_pt} only for open sets $\calS$. For the proof we need two preparatory lemmas.

\begin{lem}\label{l:I}
Let $K\subset\C$ be a compact set which is of two-sided positive type with respect to $N$. Then there exists $\veps_0 > 0$ such that for each disk $B_\veps(\la)\subset B_{\veps_0}(K)$ with radius $\veps\in (0,\veps_0]$ there exists a closed subspace $\calL_{\la,\veps}\subset\calH$ with the following properties:
\begin{enumerate}
\item[{\rm (a)}] $\left(\calL_{\la,\veps},\product\right)$ is a Hilbert space which is both $N$- and $N^+$-invariant.
\item[{\rm (b)}] $\sigma(N|\calL_{\la,\veps})\,\subset\,\sigma(N)\cap\ol{B_\veps(\la)}$.
\item[{\rm (c)}] If $\calM\subset\calH$ is a closed subspace which is both $N$- and $N^+$-invariant such that $(\calM,\product)$ is a Hilbert space with
$$
\sigma(N|\calM)\subset\ol{B_\veps(\la)},
$$
then $\calM\subset\calL_{\la,\veps}$.
\item[{\rm (d)}] If $B\in L(\calH)$ commutes with both $N$ and $N^+$, then $\calL_{\la,\veps}$ and $\calL_{\la,\veps}^\gperp$ both are $B$-invariant.
\item[{\rm (e)}] $\sigma(N)\cap B_\veps(\la)\neq\emptyset\quad\Lra\quad\calL_{\la,\veps}\neq\{0\}$.
\end{enumerate}
\end{lem}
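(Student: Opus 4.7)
The plan is to reduce Lemma~\ref{l:I} to Theorem~\ref{t:lmm} via the Krein-selfadjoint operator $A(\la)=(N^+-\ol\la)(N-\la)$ and the equivalence in Lemma~\ref{l:selfadjoint}. First, I apply Lemma~\ref{l:A_compact} to fix $\veps_0>0$ so that $\ol{B_{\veps_0}(K)}\cap\sigma(N)\subset\spp(N)$ and, for every $\la\in\ol{B_{\veps_0}(K)}$, the interval $[0,\veps_0^2]$ is of positive type with respect to $A(\la)$. Theorem~\ref{t:lmm} then provides a local spectral function $E_\la$ of positive type for $A(\la)$ on $[0,\veps_0^2]$, with $E_\la(\delta)\calH$ the maximal spectral subspace of $A(\la)$ corresponding to any compact $\delta\subset[0,\veps_0^2]$. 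I set
\[
\calL_{\la,\veps}:=E_\la([0,\veps^2])\calH
\]
for $\la\in\ol{B_{\veps_0}(K)}$ and $\veps\in(0,\veps_0]$ with $B_\veps(\la)\subset B_{\veps_0}(K)$.

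For (a), (S6) gives uniform positivity, and since $N$ and $N^+$ commute by normality and hence with $A(\la)$, they commute with $E_\la([0,\veps^2])$ by (S3); so $\calL_{\la,\veps}$ is both $N$- and $N^+$-invariant. For (b), on the uniformly positive subspace $\calL_{\la,\veps}$ the Krein adjoint $N^+|\calL_{\la,\veps}$ coincides with the Hilbert-space adjoint of $N|\calL_{\la,\veps}$, making the latter a normal operator on a Hilbert space; the Hilbert spectral theorem gives $\sigma(A(\la)|\calL_{\la,\veps})=\{|\nu-\la|^2:\nu\in\sigma(N|\calL_{\la,\veps})\}$, and combining with (S4) forces $\sigma(N|\calL_{\la,\veps})\subset\ol{B_\veps(\la)}$. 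Lemma~\ref{l:proj_sa} makes $E_\la([0,\veps^2])$ Krein-selfadjoint, yielding $\calH=\calL_{\la,\veps}[\ds]\calL_{\la,\veps}^\gperp$ into $N$-invariant summands and hence $\sigma(N|\calL_{\la,\veps})\subset\sigma(N)$. For (c), the same computation applied to $\calM$ shows $\calM$ is $A(\la)$-invariant with $\sigma(A(\la)|\calM)\subset[0,\veps^2]$, and the maximality of $\calL_{\la,\veps}$ gives $\calM\subset\calL_{\la,\veps}$. Property (d) is direct from (S3) applied to $A(\la)$.

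Property (e) is the main obstacle. If some $\mu\in\sigma(N)\cap B_\veps(\la)$ is an eigenvalue of $N$, then Lemma~\ref{l:basic2}(ii) together with $\mu\in\sp(N)$ makes $\ker(N-\mu)$ a nonzero, uniformly positive, $N$- and $N^+$-invariant subspace with spectrum $\{\mu\}\subset\ol{B_\veps(\la)}$, and (c) yields $\ker(N-\mu)\subset\calL_{\la,\veps}\neq\{0\}$. Otherwise $\la$ is not isolated in $\sigma(N)$ (else its Riesz projection would, via Lemma~\ref{l:basic}(iv), give $\ker(N-\la)\neq\{0\}$), so I can pick $\mu\in\sigma(N)\cap B_\veps(\la)$ with $\mu\neq\la$. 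An approximate eigensequence $(x_n)$ for $N-\mu$, which by Lemma~\ref{l:basic2}(i) is also one for $N^+-\ol\mu$ with $\liminf[x_n,x_n]>0$, satisfies after expanding the factors
\[
A(\la)x_n-|\mu-\la|^2 x_n\to 0,
\]
so $|\mu-\la|^2\in\sp(A(\la))\cap(0,\veps^2)$. Assuming $E_\la([0,\veps^2])=0$ for contradiction, (S5) forces $c_k\in\sigma(A(\la))\setminus[0,\veps^2]$ with $c_k\to|\mu-\la|^2$; since real points of $\R\setminus[0,\veps^2]$ are bounded away from $|\mu-\la|^2\in(0,\veps^2)$, the $c_k$ must eventually be non-real (replacing by $\ol{c_k}$ if necessary, we may assume $c_k\in\sap(A(\la))$). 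For each such $k$, approximate eigensequences $(y_n^k)_n$ at $c_k$ satisfy $\Im(c_k)[y_n^k,y_n^k]\to 0$ as $n\to\infty$ (from the realness of $[A(\la)y,y]$), whence $[y_n^k,y_n^k]\to 0$ since $\Im(c_k)\neq 0$. A diagonal choice $n_k$ then yields an approximate eigensequence $(y_{n_k}^k)$ for $A(\la)-|\mu-\la|^2$ with $[y_{n_k}^k,y_{n_k}^k]\to 0$, contradicting $|\mu-\la|^2\in\sp(A(\la))$. The principal technical hurdle is thus this non-accumulation of non-real spectrum of $A(\la)$ at the positive-type point $|\mu-\la|^2$, resolved by the diagonal Krein-selfadjointness argument.
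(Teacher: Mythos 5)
Your overall strategy is the paper's: fix $\veps_0$ by Lemma \ref{l:A_compact}, pass to the Krein-space selfadjoint operator $A(\la)=(N^+-\ol\la)(N-\la)$, apply Theorem \ref{t:lmm}, and take $\calL_{\la,\veps}$ to be the range of the resulting spectral projection; your arguments for (a)--(d) are correct and match the paper's (which uses $f(z)=|z-\la|^2$ and Rosenblum-free maximality in exactly the same way). Your diagonal argument showing that the non-real spectrum of $A(\la)$ cannot accumulate at an \emph{interior} point of positive type of the interval is also correct, and it usefully makes explicit a step the paper compresses into ``(S5) implies $(-\delta,\veps^2)\subset\rho(A)$''.

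The genuine gap is in (e), in the case $\sigma(N)\cap B_\veps(\la)=\{\la\}$, which forces $\mu=\la$ and $\mu_0=|\mu-\la|^2=0$. Your attempt to exclude this case is not valid: Lemma \ref{l:basic}(iv) identifies the \emph{root} subspace $\calL_\la(N)$ with $\ker(N-\la)$, but the Riesz--Dunford subspace of an isolated spectral point can strictly contain the root subspace --- an isolated point of the spectrum of a bounded operator need not be an eigenvalue (the restriction of $N-\la$ to the Riesz subspace is merely quasinilpotent) --- so a nonzero Riesz projection does not yield $\ker(N-\la)\neq\{0\}$. The fact that isolated points of $\sp(N)$ \emph{are} eigenvalues is only established later (Corollary \ref{c:1st}) as a consequence of Theorem \ref{t:lsf_pt}, which rests on the present lemma, so it cannot be invoked here. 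Nor can you simply run your main argument at $\mu_0=0$: since $0$ is an endpoint of your interval $[0,\veps^2]$, the step ``real points of $\R\setminus[0,\veps^2]$ are bounded away from $\mu_0$'' fails, because real spectrum of $A(\la)$ in $(-\infty,0)$ could a priori accumulate at $0$. The paper's device closes exactly this hole: because $[0,\veps^2]\cap\sigma(A(\la))\subset\sp(A(\la))$ and $\sp(A(\la))$ is open in $\sap(A(\la))$ (Lemma \ref{l:sp_open}), one may choose $\delta>0$ so that $[-\delta,\veps^2]$ is of positive type with respect to $A(\la)$ and set $\calL_{\la,\veps}:=E_{A(\la)}([-\delta,\veps^2])\calH$; then $0$ lies in the interior of the interval, and your own non-accumulation argument applies verbatim at $\mu_0=0$ (one also checks, as the paper does via the non-negativity of $A(\la)|\calL_{\la,\veps}$, that the extra piece $[-\delta,0)$ contributes nothing to $\sigma(N|\calL_{\la,\veps})$, so (b) survives). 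With that modification your proof is complete.
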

\begin{proof}
Choose $\veps_0 > 0$ according to Lemma \ref{l:A_compact} and let $\la\in\C$ and $\veps\in (0,\veps_0]$ such that $B_\veps(\la)\subset B_{\veps_0}(K)$. By Lemma \ref{l:A_compact} we have
\begin{equation}\label{e:vepses2}
[0,\veps^2]\cap\sigma(A(\la))\subset\sp(A(\la))\quad\text{and}\quad\ol{B_\veps(\la)}\cap\sigma(N)\subset\spp(N).
\end{equation}
By Lemma \ref{l:sp_open} there exists $\delta > 0$ such that $[-\delta,0)\cap\sigma(A(\la))\subset\sp(A(\la))$. Due to Theorem \ref{t:lmm} the operator $A := A(\la)$ has a local spectral function $E_A$ of positive type on $[-\delta,\veps^2]$. Due to (S3) and (S6) the subspace
$$
\calL_{\la,\veps} := E_A([-\delta,\veps^2])\calH
$$
is uniformly positive as well as $N$- and $N^+$-invariant. Therefore, the restriction $N|\calL_{\la,\veps}$ is a normal operator in the Hilbert space $(\calL_{\la,\veps},\product)$ with the adjoint $N^+|\calL_{\la,\veps}$ and
$$
A|\calL_{\la,\veps} = \big((N|\calL_{\la,\veps})^+ - \la\big)\big((N|\calL_{\la,\veps}) - \la\big).
$$
Hence, $A|\calL_{\la,\veps}$ is a non-negative selfadjoint operator in a Hilbert space which implies
$$
(-\delta,0)\subset\rho(A).
$$
Let $f(z) := (\ol z - \ol{\la})(z - \la) = |z - \la|^2$, $z\in\C$. This is a continuous function on $\C$, and we obtain
$$
\sigma(A|\calL_{\la,\veps}) = \sigma(f(N|\calL_{\la,\veps})) = f(\sigma(N|\calL_{\la,\veps})).
$$
Therefore, $z\in\sigma(N|\calL_{\la,\veps})$ implies $f(z)\in [0,\veps^2]$ and thus $z\in\ol{B_\veps(\la)}$. Since $\sigma(N|\calL_{\la,\veps}) = \sap(N|\calL_{\la,\veps})\subset\sigma(N)$, (b) is proved.

A subspace $\calM$ as in (c) is obviously $A$-invariant, and we have
$$
\sigma(A|\calM) = \sigma(f(N|\calM)) = f(\sigma(N|\calM))\,\subset\,f\left(\ol{B_\veps(\la)}\right)\,\subset\,[0,\veps^2].
$$
And since $\calL_{\la,\veps}$ is the maximal spectral subspace of $A$ corresponding to $[0,\veps^2]$, it follows that $\calM\subset\calL_{\la,\veps}$.

If $B\in L(\calH)$ as in (d), then $BA = AB$ and (d) follows from (S3).

For the proof of (e) assume that $\calL_{\la,\veps} = \{0\}$. Then $E_A([-\delta,\veps^2]) = 0$ and (S5) implies $(-\delta,\veps^2)\subset\rho(A)$. If $z\in\sigma(N)\cap B_\veps(\la)$, then $z\in\spp(N)$ and there exists an approximate eigensequence for both $N - z$ and $N^+ - \ol z$. Consequently, $(A - |\la - z|^2)x_n\to 0$ as $n\to\infty$ which contradicts $(-\delta,\veps^2)\subset\rho(A)$. Therefore, $\sigma(N)\cap B_\veps(\la) = \emptyset$.
\end{proof}

Note that the subspaces $\calL_{\la,\veps}$ in Lemma \ref{l:I} are uniquely determined by (a)--(c).

\begin{lem}\label{l:II}
Let $K$ and $\veps_0$ be as in Lemma {\rm\ref{l:I}}. Let $\Delta_1,\ldots,\Delta_m\subset\ol{B_{\veps_0}(K)}$ be closed sets such that for each $j\in\{1,\ldots,m\}$ there exists a closed subspace $\calL_j\subset\calH$ with
\begin{enumerate}
\item[$({\rm a}_j)$] $\left(\calL_j,\product\right)$ is a Hilbert space which is both $N$- and $N^+$-invariant.
\item[$({\rm b}_j)$] $\sigma(N|\calL_j)\,\subset\,\sigma(N)\cap\Delta_j$.
\item[$({\rm c}_j)$] If $\calM\subset\calH$ is a subspace which is both $N$- and $N^+$-invariant such that $(\calM,\product)$ is a Hilbert space with
$$
\sigma(N|\calM)\subset\Delta_j,
$$
then $\calM\subset\calL_j$.
\item[$({\rm d}_j)$] If $B\in L(\calH)$ commutes with both $N$ and $N^+$, then $\calL_j$ and $\calL_j^\gperp$ both are $B$-invariant.
\end{enumerate}
Then the subspace $\calL_0 := \calL_1 + \ldots + \calL_m$ is closed and satisfies $({\rm a}_0)$--$({\rm d}_0)$, where $\Delta_0 := \Delta_1\cup\ldots\cup\Delta_m$.
\end{lem}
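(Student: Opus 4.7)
I reduce to $m=2$ by induction; the general step combines $\calL_1 + \ldots + \calL_{m-1}$ (which by induction hypothesis satisfies $({\rm a})$--$({\rm d})$ with $\Delta = \Delta_1\cup\ldots\cup\Delta_{m-1}$) with $\calL_m$. Assume thus $m=2$. Since each $\calL_j$ is uniformly positive by $({\rm a}_j)$, one has $\calH = \calL_j\,[\ds]\,\calL_j^\gperp$, so the projection $P_j$ onto $\calL_j$ along $\calL_j^\gperp$ is bounded and selfadjoint. As $N$ and $N^+$ each commute with both $N$ and $N^+$, condition $({\rm d}_j)$ gives $NP_j = P_jN$ and $N^+P_j = P_jN^+$; applying $({\rm d}_1)$ with $B = P_2$ then yields $P_1P_2 = P_2P_1$.

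Set $P_0 := P_1 + P_2 - P_1P_2 = I - (I-P_1)(I-P_2)$. Using commutativity and selfadjointness of $P_1,P_2$ one checks that $P_0$ is a bounded selfadjoint projection, and the identity $P_0 x = x$ for $x\in\calL_1\cup\calL_2$ together with the obvious $P_0\calH\subset\calL_1+\calL_2$ gives $\ran(P_0) = \calL_1 + \calL_2$. Hence $\calL_0 := \calL_1 + \calL_2$ is closed. The formula $x = P_1 x + P_2(I-P_1) x$ for $x\in\calL_0$ furnishes the $\product$-orthogonal decomposition
$$
\calL_0 = \calL_1\,[\ds]\,(\calL_1^\gperp\cap\calL_2),
$$
both summands being uniformly positive---the second as a $\product$-closed subspace of the Hilbert space $\calL_2$. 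Hence $(\calL_0,\product)$ is a Hilbert space, and since $N$- and $N^+$-invariance of $\calL_0$ is inherited from $\calL_1$ and $\calL_2$, property $({\rm a}_0)$ is proved.

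The same decomposition delivers $({\rm b}_0)$. The subspace $\calK := \calL_1^\gperp\cap\calL_2$ is $N$- and $N^+$-invariant (apply $({\rm d}_1)$ to $B\in\{N,N^+\}$) and equals the range of the orthogonal projection $(I-P_1)|\calL_2$ in the Hilbert space $(\calL_2,\product)$, which commutes with the normal operator $N|\calL_2$. By standard Hilbert-space theory $\sigma(N|\calK)\subset\sigma(N|\calL_2)\subset\Delta_2$, which combined with $\sigma(N|\calL_1)\subset\Delta_1$ yields $\sigma(N|\calL_0)\subset\Delta_0$. The inclusion $\sigma(N|\calL_0)\subset\sigma(N)$ follows from $\calH = \calL_0\,[\ds]\,\calL_0^\gperp$ with both summands $N$-invariant (note $\calL_0^\gperp = \calL_1^\gperp\cap\calL_2^\gperp$, intersection of two $N$-invariant subspaces).

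For $({\rm c}_0)$, let $\calM$ be as in the hypothesis. Since $\calM$ is $N^+$-invariant and carries $\product$ as its Hilbert space inner product, $(N|\calM)^* = N^+|\calM$, so $N|\calM$ is a bounded normal operator in the Hilbert space $(\calM,\product)$; let $E_\calM$ be its spectral measure. The subspaces $\calM_j := E_\calM(\Delta_j)\calM$ are closed, $N$- and $N^+$-invariant (as $E_\calM$ commutes with $N|\calM$ and $(N|\calM)^*$), themselves Hilbert spaces, and satisfy $\sigma(N|\calM_j)\subset\Delta_j$, so $\calM_j\subset\calL_j$ by $({\rm c}_j)$; as $\Delta_1\cup\Delta_2\supset\sigma(N|\calM)$ one has $\calM = \calM_1 + \calM_2\subset\calL_0$. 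Finally $({\rm d}_0)$ is immediate: for $B$ commuting with $N$ and $N^+$, conditions $({\rm d}_1)$ and $({\rm d}_2)$ give $B$-invariance of both $\calL_0 = \calL_1+\calL_2$ and $\calL_0^\gperp = \calL_1^\gperp\cap\calL_2^\gperp$. The main subtlety I expect is in $({\rm b}_0)$: passing from the symmetric sum $\calL_1 + \calL_2$ to the asymmetric $\product$-orthogonal decomposition $\calL_1\,[\ds]\,\calK$ is what lets the spectrum split cleanly across $\Delta_1$ and $\Delta_2$.
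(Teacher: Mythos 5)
Your proposal is correct and follows essentially the same route as the paper: the commuting selfadjoint projections, the projection $P_1+P_2-P_1P_2$, the decomposition $\calL_0=\calL_1\,[\ds]\,(\calL_1^\gperp\cap\calL_2)$, and the spectral-measure splitting of $\calM$ for $({\rm c}_0)$ all match the paper's argument. The only (harmless) differences are that you verify the uniform positivity of the orthogonal sum directly where the paper cites a lemma of Langer, and you spell out the details of $({\rm b}_0)$ which the paper declares easily verified.
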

\begin{proof}
We only prove Lemma \ref{l:II} for $m=2$. The general case then follows by induction. For $j\in\{1,2\}$ denote by $E_j$ the $\product$-orthogonal projection onto $\calL_j$ and define
$$
E_0 := E_1 + E_2 - E_1E_2 = E_1 + (I - E_1)E_2.
$$
From $({\rm a}_1)$ it follows that $E_1$ commutes with $N$ and $N^+$. By $({\rm d}_2)$, $E_1E_2 = E_2E_1$. Hence, $E_0$ is a selfadjoint projection, and the following relation holds:
$$
\calL_0 = \calL_1 + \calL_2 = \calL_1\,[\ds]\,\left(\calL_1^\gperp\cap\calL_2\right) = E_0\calH.
$$
By \cite[Lemma I.5.3]{l}, $(\calL_0,\product)$ is a Hilbert space. Thus, $({\rm a}_0)$ holds, and $({\rm b}_0)$ as well as $({\rm d}_0)$ are easily verified.

Let $\calM$ be a subspace as in $({\rm c}_0)$. Then $N|\calM$ is a normal operator in the Hilbert space $(\calM,\product)$. Let $F$ be its spectral measure. Then
$$
\calM = F(\Delta_1)\calM\,[\ds]\,(I_\calM - F(\Delta_1))\calM.
$$
We have $\sigma(N|F(\Delta_1)\calM)\subset\Delta_1$ and
$$
\sigma(N|(I_\calM - F(\Delta_1))\calM)\subset\ol{\sigma(N|\calM)\setminus\Delta_1}\subset\ol{(\Delta_1\cup\Delta_2)\setminus\Delta_1}\subset\Delta_2.
$$
Hence, from (c$_1$) and (c$_2$) we conclude $F(\Delta_1)\calM\subset\calL_1$, $(I_\calM - F(\Delta_1))\calM\subset\calL_2$ and therefore $\calM\subset\calL_0$.
\end{proof}

A proof of the following lemma can be found in \cite{rr} (see also \cite{pst}).

\begin{lem}[Rosenblum's Corollary]\label{l:rosenblum}
Let $\calX$ and $\calY$ be Banach spaces and let $S\in L(\calX)$ and $T\in L(\calY)$. If $\sigma(S)\cap\sigma(T) = \emptyset$, then for every $Z\in L(\mathcal Y,\mathcal X)$ the operator equation
$$
SX - XT = Z
$$
has a unique solution $X\in L(\mathcal Y,\mathcal X)$. In particular, $SX = XT$ implies $X = 0$.
\end{lem}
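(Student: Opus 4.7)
The plan is to use the Riesz--Dunford holomorphic functional calculus, together with a contour integral that separates the two spectra. First, I would choose a Cauchy domain $U\subset\C$ with $\sigma(S)\subset U$ and $\ol U\cap\sigma(T) = \emptyset$; let $\Gamma = \partial U$ consist of finitely many rectifiable Jordan curves, oriented so that $\Gamma$ winds once around each point of $\sigma(S)$ and zero times around each point of $\sigma(T)$. For the given $Z\in L(\calY,\calX)$ define
$$
X := \frac{1}{2\pi i}\int_\Gamma (\la - S)^{-1}\,Z\,(\la - T)^{-1}\,d\la.
$$
The integrand is continuous on $\Gamma$ and takes values in $L(\calY,\calX)$, so $X\in L(\calY,\calX)$ is well defined.

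Second, I would verify $SX - XT = Z$ by a direct manipulation under the integral. Using the identities $S(\la - S)^{-1} = \la(\la - S)^{-1} - I$ and $T(\la - T)^{-1} = \la(\la - T)^{-1} - I$, the integrand of $SX - XT$ simplifies to $(\la - S)^{-1}Z - Z(\la - T)^{-1}$. By the functional calculus for $S$ applied to $f\equiv 1$ one has $\frac{1}{2\pi i}\int_\Gamma (\la - S)^{-1}d\la = I$, while $\frac{1}{2\pi i}\int_\Gamma (\la - T)^{-1}d\la = 0$ because $\Gamma$ is null-homologous in $\rho(T)$. Pulling $Z$ out of each integral yields $SX - XT = Z$.

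For uniqueness, assume $SX = XT$. Iterating gives $S^n X = X T^n$ for every $n\in\N$, hence $p(S)X = X p(T)$ for every polynomial $p$, and by the holomorphic functional calculus $f(S)X = X f(T)$ for every $f$ holomorphic on a neighbourhood of $\sigma(S)\cup\sigma(T)$. Since $\sigma(S)\cap\sigma(T)=\emptyset$, the function equal to $1$ near $\sigma(S)$ and $0$ near $\sigma(T)$ is admissible and yields $I\cdot X = X\cdot 0$, so $X = 0$.

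The main technical point to be careful about is the continuity and operator-valued integrability on $\Gamma$ (routine, since $\la\mapsto(\la-S)^{-1}$ and $\la\mapsto(\la-T)^{-1}$ are norm-continuous on $\Gamma$), together with the correct bookkeeping of signs and orientations so that $\Gamma$ bounds a domain for $S$ but is null-homologous for $T$. Everything else reduces to the two standard Cauchy-type integrals of the resolvents, so I expect no serious obstacle.
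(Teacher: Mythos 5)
The paper does not prove this lemma at all; it simply cites \cite{rr}, where essentially the same contour-integral construction appears, so your argument is the standard one and it is correct: the choice of the Cauchy domain separating the spectra, the computation reducing the integrand of $SX-XT$ to $(\la-S)^{-1}Z - Z(\la-T)^{-1}$, and the evaluation of the two resolvent integrals are all sound. One small point on the uniqueness step: knowing $p(S)X = Xp(T)$ for polynomials does not by itself yield $f(S)X = Xf(T)$ for every admissible holomorphic $f$ (a Runge-type approximation would need the complement of $\sigma(S)\cup\sigma(T)$ to be connected, which you have not arranged); the clean route is to observe that $SX = XT$ gives $(\la-S)^{-1}X = X(\la-T)^{-1}$ for all $\la\in\rho(S)\cap\rho(T)$ and to integrate $f(\la)$ against this identity over a contour surrounding $\sigma(S)\cup\sigma(T)$, after which your choice of $f$ equal to $1$ near $\sigma(S)$ and $0$ near $\sigma(T)$ finishes the argument exactly as you say.
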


We are now prepared to prove Theorem \ref{t:lsf_pt}. By $\Delta^i$ we denote the interior of a subset $\Delta\subset\C$.

\begin{proof}[Proof of Theorem {\rm\ref{t:lsf_pt}}]
The proof is divided into three steps. In the first two steps it is shown that Theorem {\rm\ref{t:lsf_pt}} holds for compact sets $\calS = K$. More precisely, in step 1 it is shown that there exists a spectral subspace $\calL_0$ for $N$ corresponding to a compact set $\Delta_0$ containing $K$ which has the properties (a$_0$)--(d$_0$) in Lemma \ref{l:II} and (i)--(iii) below. In the second step the local spectral function of $N$ on $K$ is defined via the orthogonal projection onto $\calL_0$ and the spectral measure of the normal operator $N|\calL_0$ in the Hilbert space $(\calL_0,\product)$. In the last step we prove that Theorem \ref{t:lsf_pt} holds for open sets $\calS$.

{\bf 1.} Let $K$ be a compact set of two-sided positive type with respect to $N$ and let $\veps_0 > 0$ be as in Lemma \ref{l:I}. Then choose some $\veps_1\in (0,\veps_0)$ and $\la_1,\dots,\la_m\in K$ such that
$$
K\,\subset\,\bigcup_{j=1}^m\,B_{\veps_1}(\la_j)\,\subset\,\bigcup_{j=1}^m\,\ol{B_{\veps_1}(\la_j)}\,=:\,\Delta_0 \,\subset\,B_{\veps_0}(K).
$$
By Lemma \ref{l:I} and Lemma \ref{l:II} there exists a closed subspace $\calL_0\subset\calH$ satisfying $({\rm a}_0)$--$({\rm d}_0)$ in Lemma \ref{l:II}. We will show that $\calL_0$ also has the following properties:
\begin{enumerate}
\item[(i)]   $\sigma(N|\calL_0^\gperp)\,\subset\,\ol{\sigma(N)\setminus\Delta_0}$.
\item[(ii)]  If $B\in L(\calH)$ with $BN=NB$, then $\calL_0$ and $\calL_0^\gperp$ are $B$-invariant.
\item[(iii)] $\calL_0$ is the maximal spectral subspace of $N$ corresponding to $\Delta_0$.
\end{enumerate}
First of all we prove
\begin{equation}\label{e:big_pp}
\ol{B_{\veps_0}(K)}\cap\sigma(N|\calL_0^\gperp)\,\subset\,\spp(N|\calL_0^\gperp).
\end{equation}
Since $\ol{B_{\veps_0}(K)}\cap\sigma(N)\subset\spp(N)$, it suffices to show that
$$
\ol{B_{\veps_0}(K)}\cap\sigma(N|\calL_0^\gperp)\,\subset\,\sap(N|\calL_0^\gperp),
$$
cf.\ Lemma \ref{l:basic2}(i). Let $\la\in\ol{B_{\veps_0}(K)}\,\cap\,\sigma(N|\calL_0^\gperp)$ and suppose that $\la$ is not an element of $\sap(N|\calL_0^\gperp)$. Then $\ol\la\in\sigma_p(N^+|\calL_0^\gperp)$, from which $\la\in\sigma(N)$ follows. But this implies $\la\in\spp(N)$ and therefore $\la\in\sigma_p(N|\calL_0^\gperp)$ (see Lemma \ref{l:basic2}(ii)). A contradiction.

Let $\la\in\C\setminus\ol{\sigma(N)\setminus\Delta_0}$. Then $\sigma(N)\setminus\Delta_0$ does not accumulate to $\la$ which means that there exists $\veps' > 0$ such that $\sigma(N)\cap B_{\veps'}(\la)\subset\Delta_0$. Due to \eqref{e:big_pp} and Lemma \ref{l:I} there exist $\veps\in (0,\veps')$ and a closed $N$- and $N^+$-invariant subspace $\calM\subset\calL_0^\gperp$ such that $(\calM,\product)$ is a Hilbert space and $\sigma(N|\calM)\subset\sigma(N|\calL_0^\gperp)\cap\ol{B_\veps(\la)}$. As $\sigma(N|\calL_0^\gperp)\subset\sigma(N)$, we have $\sigma(N|\calM)\subset\Delta_0$. From (c$_0$) we conclude $\calM\subset\calL_0$. But $\calM\subset\calL_0^\gperp$ and thus $\calM = \{0\}$. From Lemma \ref{l:I}(e) we obtain $B_\veps(\la)\subset\rho(N|\calL_0^\gperp)$ which proves (i).

For the proofs of (ii) and (iii) let $(\delta_n)$ be a sequence of positive numbers such that
$$
\veps_0 > \delta_1 > \delta_2 > \ldots > \veps_1\quad\text{and}\quad\delta_n\downto\veps_1\;\text{ as }n\to\infty.
$$
Set
$$
\Delta_n := \bigcup_{j=1}^m\,\ol{B_{\delta_n}(\la_j)}\,\subset\, B_{\veps_0}(K).
$$
Then (recall that $\Delta_0$ was defined similarly)
$$
\Delta_0 = \bigcap_{n=1}^\infty\,\Delta_n\quad\text{and}\quad\Delta_0\subset\Delta_n^i\;\text{ for every }n\in\N.
$$
For $n\in\N$ by $\calL_n$ we denote the closed subspace which satisfies $({\rm a}_n)$--$({\rm d}_n)$ in Lemma \ref{l:II}. We have
\begin{equation}\label{e:alltogether}
\calL_{n+1}\,\subset\,\calL_n,\;\;\calL_0\,\subset\,\calL_n\;\;\text{for all $n\in\N\setminus\{0\}$ }\quad\text{and}\quad\calL_0 = \bigcap_{n=1}^\infty\,\calL_n.
\end{equation}
Indeed, the first two inclusions follow immediately from the properties $({\rm c}_k)$, $k\in\N$, in Lemma \ref{l:II}. Hence, $\calL_0\subset\bigcap_{n=1}^\infty\,\calL_n =: \calM$, and it is not difficult to see that $\sigma(N|\calM)\subset\Delta_0$ (consider $\la\notin\Delta_0$ and show $\la\in\rho(N|\calM)$). The last relation in \eqref{e:alltogether} follows now from $({\rm c}_0)$.

Let $B\in L(\calH)$ such that $BN=NB$ and set $B_0 := B|\calL_0\in L(\calL_0,\calH)$. By $E_n$ we denote the $\product$-orthogonal projection in $\calH$ onto $\calL_n$, $n\in\N$. As $\calL_n$ and $\calL_n^\gperp$ both are $N$-invariant, $E_n$ commutes with $N$. Hence, the following relation holds:
$$
\left(N|\calL_n^\gperp\right)\big(\left(I - E_n\right)B_0\big) = (I - E_n)NB_0 = \big(\left(I - E_n\right)B_0\big)\big(N|\calL_0\big).
$$
By (i) and $({\rm b}_0)$ the spectra of $N|\calL_n^\gperp$ and $N|\calL_0$ are disjoint. Thus, due to Rosenblum's Corollary (Lemma \ref{l:rosenblum}) it follows that $(I - E_n)B_0 = 0$, or equivalently, $B\calL_0\subset\calL_n$ for every $n\in\N$. By virtue of \eqref{e:alltogether}, $\calL_0$ is $B$-invariant. Similarly, one shows $B\calL_n^\gperp\subset\calL_0^\gperp$ for each $n\in\N$. It is easy to see that
$$
\cls\left\{\calL_n^\gperp : n\in\N\right\}^\gperp = \bigcap_{n=1}^\infty\,\calL_n = \calL_0.
$$
Hence, $B\calL_0^\gperp\subset\calL_0^\gperp$ follows immediately from
$$
\cls\left\{\calL_n^\gperp : n\in\N\right\} = \calL_0^\gperp.
$$
Let $\calM\subset\calH$ be a closed $N$-invariant subspace such that $\sigma(N|\calM)\subset\Delta_0$. Then from Rosenblum's Corollary and the relation
$$
\left(N|\calL_n^\gperp\right)\,\left(\big(I - E_n\big)\big|\calM\right) = \left(\big(I - E_n\big)\big|\calM\right)\,\big(N|\calM\big)
$$
we conclude $\calM\subset\calL_n$ for all $n\in\N$, and thus $\calM\subset\calL_0$, which shows (iii).

{\bf 2.} Let us complete the proof of Theorem \ref{t:lsf_pt} for $\calS = K$. Let $\Delta_0$ and $\calL_0$ be as in step 1. The operator $N_0 := N|\calL_0$ is a normal operator in the Hilbert space $(\calL_0,\product)$ and has therefore a spectral measure $E_0$. By $Q$ we denote the $\product$-orthogonal projection onto $\calL_0$ and define
$$
E(\Delta) := E_0(\Delta)Q,\quad\Delta\in\frakB(K).
$$
For each $\Delta\in\frakB(K)$ the operator $E(\Delta)$ is a selfadjoint projection, and it is easily seen that $E$ has the properties (S1)--(S3) and (S6) in Definition \ref{d:lsf_pt}. Let $\Delta\in\frakB(K)$. Then
\begin{align*}
\sigma(N|E(\Delta)\calH)
&= \sigma(N_0|E_0(\Delta)\calL_0)\subset\ol{\sigma(N_0)\cap\Delta}\\
&\subset\ol{\sigma(N)\cap\Delta_0\cap\Delta} = \ol{\sigma(N)\cap\Delta}.
\end{align*}
And since
$$
(I - E(\Delta))\calH = \calL_0^\gperp\,[\ds]\,\left(\big(E_0(\Delta)\calL_0\big)^\gperp\cap\calL_0\right),
$$
we have
\begin{align*}
\sigma(N|(I - E(\Delta))\calH)
&= \sigma(N|\calL_0^\gperp)\,\cup\,\sigma\left(N_0\big|\big(E_0(\Delta)\calL_0\big)^\gperp\cap\calL_0\right)\\
&\subset\,\ol{\sigma(N)\setminus\Delta_0}\,\cup\,\ol{\sigma(N_0)\setminus\Delta}\\
&\subset\,\ol{\sigma(N)\setminus\Delta}.
\end{align*}
Moreover, if $\Delta\subset K$ is closed, then $E(\Delta)\calH$ is the maximal spectral subspace of $N$ corresponding to $\Delta$ (we say that $E$ has the property (M)): if $\calM\subset\calH$ is a closed $N$-invariant subspace such that $\sigma(N|\calM)\subset\Delta$, then $\calM\subset\calL_0$ by (iii) in step 1 and hence, we have $\sigma(N_0|\calM)\subset\Delta$. From this and the properties of the spectral measure $E_0$ of $N_0$ we obtain $\calM\subset E_0(\Delta)\calL_0 = E_0(\Delta)Q\calH = E(\Delta)\calH$. In particular, this shows that the definition of $E$ does not depend on the choice of $\veps_0$ and $\veps_1$. Indeed, if $\wt E$ is another local spectral function of positive type for $N$ on $K$ with the property (M), then $\wt E(\Delta) = E(\Delta)$ for all closed sets $\Delta\subset K$. And as the system of the closed subsets of $K$ is a generator of the $\sigma$-algebra $\mathfrak B(K)$ which is stable with respect to intersections, $\wt E = E$ follows.

{\bf 3.} Finally, we prove that Theorem \ref{t:lsf_pt} holds for open sets $\calS$. Clearly, it is no restriction to assume that $\calS$ is bounded. For a closed set $K\subset\calS$ denote by $E_K$ the local spectral function of positive type of $N$ on $K$ defined in the previous steps. Let $K_1$ and $K_2$ be two closed sets in $\calS$ such that $K_1\subset K_2$. Then $E_{K_2}|\mathfrak B(K_1)$ is a local spectral function of positive type for $N$ on $K_1$ with the property (M). Hence, $E_{K_2}|\mathfrak B(K_1) = E_{K_1}$. Therefore,
$$
E(\Delta) := E_{\ol\Delta}(\Delta),\quad\Delta\in\mathfrak B(\calS)
$$
defines a local spectral function of positive type for $N$ on $\calS$ with the property (M). The theorem is proved.
\end{proof}

The following corollary is a direct consequence of Theorem \ref{t:lsf_pt}.

\begin{cor}\label{c:res}
Let $\la_0\in\spp(N)$ be an accumulation point of $\rho(N)$. Then there exist $\veps > 0$ and $C > 0$ such that for all $\la\in B_\veps(\la_0)\cap\rho(N)$ we have
$$
\|(N - \la)^{-1}\|\,\le\,\frac{C}{\operatorname{dist}(\la,\sigma(N))}.
$$
In particular, an isolated spectral point of $N$ which is of two-sided positive type is a pole of order one of the resolvent of $N$.
\end{cor}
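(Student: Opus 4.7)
The plan is to apply Theorem~\ref{t:lsf_pt} to localize $N$ around $\la_0$ into a Hilbert-space piece plus a spectrally separated remainder. Since $\spp(N)$ is open in $\sigma(N)$ by Corollary~\ref{c:spp_open}, I would choose $r > 0$ so small that the compact disk $K := \ol{B_r(\la_0)}$ satisfies $K \cap \sigma(N) \subset \spp(N)$, i.e.\ so that $K$ is of two-sided positive type. Theorem~\ref{t:lsf_pt} then yields a local spectral function $E$ of positive type on a Borel neighbourhood of $K$; I set $P_0 := E(K)$, $\calL_0 := P_0\calH$ and $\calL_1 := (I - P_0)\calH$. Property (S6) makes $\calL_0$ uniformly positive, hence $\calH = \calL_0\,[\ds]\,\calL_1$, while (S3) together with Lemma~\ref{l:proj_sa} shows that both subspaces are $N$-invariant. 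Writing $N_j := N|\calL_j$, the operator $N_0$ is normal in the Hilbert space $(\calL_0,\product)$ with $\sigma(N_0) \subset K$ by (S4), and $\sigma(N_1) \subset \ol{\sigma(N)\setminus K}$ by (S5); in particular $\sigma(N_1) \cap B_r(\la_0) = \emptyset$.

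Next I would fix $\veps \in (0,r/2)$ and decompose the resolvent along $\calH = \calL_0\,[\ds]\,\calL_1$ as
$$
(N - \la)^{-1} = (N_0 - \la)^{-1} P_0 + (N_1 - \la)^{-1}(I - P_0), \qquad \la\in\rho(N).
$$
For $\la\in B_\veps(\la_0)$ the separation $\operatorname{dist}(\la,\sigma(N_1)) \ge r - \veps > r/2$ produces a uniform bound $\|(N_1 - \la)^{-1}\|\le C_1$. Since $\la_0 \in \sigma(N)\setminus\sigma(N_1)$ we have $\la_0 \in \sigma(N_0)$, and the normality of $N_0$ in a Hilbert space gives $\|(N_0 - \la)^{-1}\| = 1/\operatorname{dist}(\la,\sigma(N_0))$. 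The chain $\operatorname{dist}(\la,\sigma(N_0)) \le |\la - \la_0| < \veps < r/2 \le \operatorname{dist}(\la,\sigma(N_1))$ then forces $\operatorname{dist}(\la,\sigma(N)) = \operatorname{dist}(\la,\sigma(N_0))$. Combining this with $\operatorname{dist}(\la,\sigma(N)) < \veps$ lets me absorb the bounded second term into the $1/\operatorname{dist}$ estimate and yields the desired inequality with $C := \|P_0\| + \veps\,C_1\,\|I - P_0\|$.

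For the final assertion, an isolated point $\la_0\in\sigma(N)$ is automatically an accumulation point of $\rho(N)$, so the first part applies. Shrinking $\veps$ so that $B_\veps(\la_0)\cap\sigma(N) = \{\la_0\}$ turns the estimate into $\|(\la - \la_0)(N - \la)^{-1}\| \le C$ on the punctured disk, which means that $(\la - \la_0)(N - \la)^{-1}$ extends analytically across $\la_0$. Equivalently, the principal part of the Laurent expansion of $(N - \la)^{-1}$ at $\la_0$ reduces to a single term, so $\la_0$ is a pole of order exactly one. The main obstacle I foresee is the bookkeeping step identifying $\operatorname{dist}(\la,\sigma(N))$ with $\operatorname{dist}(\la,\sigma(N_0))$ on the small disk, which hinges on the spectral separation between $N_0$ and $N_1$ provided by (S4) and (S5); once that is in hand the remainder is standard Hilbert-space spectral theory.
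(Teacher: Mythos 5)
Your argument is correct and follows essentially the same route as the paper's proof: both invoke Theorem~\ref{t:lsf_pt} on a small closed disk around $\la_0$, use the uniform resolvent bound on the complementary invariant subspace, and exploit that $N$ restricted to the uniformly positive spectral subspace is normal in a Hilbert space so that its resolvent is controlled by $1/\operatorname{dist}(\la,\sigma(N))$. The only (harmless) differences are that your identification $\operatorname{dist}(\la,\sigma(N))=\operatorname{dist}(\la,\sigma(N_0))$ is more than is needed --- the inclusion $\sigma(N_0)\subset\sigma(N)$ already gives the required inequality --- and that the exact formula $\|(N_0-\la)^{-1}\|=1/\operatorname{dist}(\la,\sigma(N_0))$ holds in the intrinsic norm of $(\calL_0,\product)$, so a uniform-positivity constant (the $\delta$ appearing in the paper's proof) should be absorbed into $C$ when passing to the ambient norm $\|\cdot\|$.
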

\begin{proof}
Choose $\veps > 0$ such that $B_0 := \ol{B_{2\veps}(\la_0)}$ is of two-sided positive type with respect to $N$. Denote by $E$ the local spectral function of $N$ on $B_0$ and set $\calL_0 := E(B_0)\calH$. Then $B_0^i\subset\rho(N|\calL_0^\gperp)$. Hence, there exists $C_1 > 0$ such that
$$
\left\|\left((N|\calL_0^\gperp) - \la\right)^{-1}\right\|\,\le\,C_1\quad\text{ for all }\la\in B_\veps(\la_0).
$$
The restriction of $N$ to $\calL_0$ is a normal operator in a Hilbert space. Therefore, for any $x\in\calL_0$ and $\la\in B_\veps(\la_0)\cap\rho(N)$ we have the well-known inequality
$$
[(N - \la)x,(N - \la)x]\,\ge\,\operatorname{dist}(\la,\sigma(N|\calL_0))^2\,[x,x].
$$
As the subspace $\calL_0$ is uniformly positive, this implies
$$
\|(N - \la)x\|^2\,\ge\,\operatorname{dist}(\la,\sigma(N))^2\cdot\delta\|x\|^2,
$$
with some $\delta > 0$, and the assertion follows.
\end{proof}

\section{Spectral sets which are of positive type}\label{s:spectral_set}
Let $\sigma$ be a spectral set for $N$ (i.e.\ a subset of $\sigma(N)$ which is both open and closed in $\sigma(N)$) which is of positive type with respect to $N$. In \cite{pst} it was shown that the Riesz-Dunford spectral subspace of $N$ corresponding to $\sigma$ is uniformly positive if the spectrum of the imaginary part $\Im N = \frac 1 {2i}(N - N^+)$ is real and if there exist $C > 0$ and $m\in\N$ such that
\begin{equation}\label{e:growth}
\|(\Im N - \la)^{-1}\|\,\le\,\frac C {|\Im\la|^m}
\end{equation}
holds for all non-real $\la$ in a neighborhood of $\sigma(\Im N)$. The same holds if the above conditions are satisfied for the real part $\Re N = \frac 1 2 (N + N^+)$ instead for $\Im N$. The following theorem shows that these assumptions are redundant.

\begin{thm}\label{t:spectral_set}
Let $\sigma$ be a spectral set for $N$, let $Q$ be the Riesz-Dunford projection of $N$ corresponding to $\sigma$ and assume that
\begin{equation}\label{e:sp_inc}
\sigma\cap\sap(N)\,\subset\,\sp(N).
\end{equation}
Then $Q$ is selfadjoint and $Q\calH$ is uniformly positive. In particular, $N|Q\calH$ is a normal operator in the Hilbert space $(Q\calH,\product)$.
\end{thm}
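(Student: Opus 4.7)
The proof has three parts: $Q$ is a normal projection, $Q$ is selfadjoint, and $K:=Q\calH$ is uniformly positive. For the first two I would invoke Lemma~\ref{l:proj}. The Riesz--Dunford projection $Q$ is a contour integral of the resolvent of $N$ and therefore commutes with every bounded operator commuting with $N$; by normality $N^+$ commutes with $N$ and hence with $Q$, so Lemma~\ref{l:proj} yields that $Q$ is normal. Moreover $Q\calH$ is $N$-invariant with $\sigma(N|Q\calH)=\sigma$, and any normalized approximate eigensequence in $Q\calH$ for $(N|Q\calH)-\la$ is also one for $N-\la$ in $\calH$; hence $\sap(N|Q\calH)\subset\sigma\cap\sap(N)\subset\sp(N)\subset\sp(N)\cup\sm(N)$, and condition~(a) of Lemma~\ref{l:proj} gives $Q=Q^+$.

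Selfadjointness produces the decomposition $\calH=K\,[\ds]\,K^\gperp$ and makes $K$ into a Krein space on which $N_0:=N|K$ is bounded and normal, with $\sigma(N_0)=\sigma$ and (by inheritance of $\product$) $\sap(N_0)\subset\sp(N_0)$ within $K$. My plan is to prove that $\sigma$ is actually of two-sided positive type with respect to $N_0$, i.e.\ additionally $\sigma^*\cap\sap(N_0^+)\subset\sp(N_0^+)$. Once that is in hand, Theorem~\ref{t:lsf_pt} applied to $N_0$ on the compact set $\sigma$ yields a local spectral function $E_0$ of positive type; the maximal-spectral-subspace property together with $\sigma(N_0)=\sigma$ forces $E_0(\sigma)=I_K$, and condition (S6) then delivers that $K$ is uniformly positive.

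The crux is the two-sided condition, which I would handle by a neutral-invariant-subspace argument. First, $\sigma_r(N_0)=\emptyset$: if $\la\in\sigma_r(N_0)$ then by Lemma~\ref{l:sap_adj}(ii) the subspace $L:=\ker(N_0^+-\ol\la)$ is non-trivial; it is $N_0$-invariant by $N_0N_0^+=N_0^+N_0$, and $[(N_0-\la)y,z]=[y,(N_0^+-\ol\la)z]=0$ for $y,z\in L$ forces $(N_0-\la)L\subset L\cap L^\gperp=L^{[0]}$. Injectivity of $N_0-\la$ (from $\la\notin\sap(N_0)$) makes the chain $\{(N_0-\la)^m x\}_{m\ge 1}$ non-terminating for any $0\ne x\in L$, and its closed linear span is a non-trivial closed $N_0$-invariant totally neutral subspace $M\subset K$; any $\la_0\in\partial\sigma(N_0|M)\subset\sap(N_0|M)\subset\sap(N_0)\subset\sp(N_0)$ then admits a normalized approximate eigensequence in $M$ with vanishing inner product, a contradiction. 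The same device combined with Lemma~\ref{l:basic}(iii) yields $\sigma_r(N_0^+)=\emptyset$. For the sign condition, given $\mu\in\sap(N_0^+)=\sigma^*$ and an approximate eigensequence $(x_n)$ for $N_0^+-\mu$, the selfadjoint $A:=(N_0-\ol\mu)(N_0^+-\mu)=(N_0^+-\mu)(N_0-\ol\mu)$ gives
\[
[(N_0-\ol\mu)x_n,(N_0-\ol\mu)x_n]=[Ax_n,x_n]=[(N_0^+-\mu)x_n,(N_0^+-\mu)x_n]\,\lra\,0.
\]
If $(N_0-\ol\mu)x_n\to 0$ along a subsequence, then $(x_n)$ is an approximate eigensequence for $N_0-\ol\mu$ and $\ol\mu\in\sap(N_0)\subset\sp(N_0)$ gives $\liminf[x_n,x_n]>0$ directly. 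The main obstacle is the complementary case, in which the normalisations of $(N_0-\ol\mu)x_n$ yield a further approximate eigensequence for $N_0^+-\mu$ with vanishing Krein square; I would eliminate it by a refinement of the neutral-invariant-subspace argument above, using that (by the same mechanism applied with $\mu$ in place of $\la$) $\ker(N_0^+-\mu)=\ker(N_0-\ol\mu)$ is uniformly positive, since any nonzero eigenvector at $\ol\mu$ yields a constant approximate eigensequence at the positive-type point $\ol\mu\in\sp(N_0)$.
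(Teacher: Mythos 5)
Your overall skeleton coincides with the paper's: reduce to $K=Q\calH$ with $Q=Q^+$ via Lemma \ref{l:proj}(a), upgrade the hypothesis \eqref{e:sp_inc} to two-sided positive type, and then invoke Theorem \ref{t:lsf_pt} together with the maximality property (which forces $E_0(\sigma)=I_K$) to get uniform positivity from (S6). Your neutral-invariant-subspace argument showing $\sigma(N_0)=\sap(N_0)$ is also correct (and in fact more than is needed, since by Remark \ref{r:set_tspt} only the approximate point spectra enter). The genuine gap is exactly where you write ``the main obstacle'': you must rule out an approximate eigensequence $(x_n)$ for $N_0^+-\mu$ with $\liminf[x_n,x_n]\le 0$ in the case where $\|(N_0-\ol\mu)x_n\|$ stays bounded away from $0$. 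Your proposed fix --- that $\ker(N_0^+-\mu)=\ker(N_0-\ol\mu)$ is uniformly positive --- says nothing here, because $(x_n)$ need not be asymptotically close to that kernel (the kernel may be $\{0\}$ while $\mu\in\sap(N_0^+)$). Iterating your normalization trick does not terminate either: normalizing $(N_0-\ol\mu)x_n$ merely produces another approximate eigensequence for $N_0^+-\mu$ with vanishing Krein square, which is the situation you started from. And the ``refinement of the neutral-invariant-subspace argument'' cannot be run inside $\calH$, since approximate eigensequences do not generate an actual closed invariant subspace there.

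The paper closes precisely this gap by passing to the quotient Banach space $\wt\calH=\ell^\infty(\calH)/c_0(\calH)$, on which approximate eigensequences become genuine kernel vectors of the lifted operators $\wt N$ and $\wt{N^+}$. There $\calM:=\ol{(\wt N-\la)\ker(\wt{N^+}-\ol\la)}$ is a bona fide closed $\wt N$-invariant subspace --- the rigorous incarnation of your neutral invariant subspace --- and one shows $\calM=\{0\}$ by taking an approximate eigensequence for $\wt N-\mu$ in $\calM$ at a boundary point $\mu$ of $\sigma(\wt N|\calM)$ (crucially, $\mu$ need not equal $\la$), extracting a diagonal sequence $(x_k)\subset\calH$, and playing $\mu\in\sap(N)=\sp(N)$, which forces $\liminf_k[x_k,x_k]>0$, against the identity $[(N-\la)u,(N-\la)u]=[(N^+-\ol\la)u,(N^+-\ol\la)u]$, which forces $[x_k,x_k]\to 0$. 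The freedom to derive the contradiction at a \emph{different} spectral point of the restriction, available only once the invariant subspace has been materialized in $\wt\calH$, is the idea your proposal is missing; without it (or some substitute), the decisive step is unproved.
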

\begin{proof}
The projection $Q$ is selfadjoint by Lemma \ref{l:proj} (see also \cite[Lemma 5.1]{pst}). This implies that the inner product space $(Q\calH,\product)$ is a Krein space which is invariant with respect to both $N$ and $N^+$. Moreover, we have
$$
(N|Q\calH)^+ = N^+|Q\calH
$$
and $\sap(N|Q\calH) = \sp(N|Q\calH)$. It is therefore no restriction to assume $\calH = Q\calH$ and $\sap(N) = \sp(N)$. In view of Remark \ref{r:set_tspt} and Theorem \ref{t:lsf_pt} it only remains to show that $\C$ is of positive type with respect to $N^+$, i.e.\ $\sap(N^+)\subset\sp(N^+)$. Let $\ol\la\in\sap(N^+)$. We have to show that the approximate eigensequences for $N^+ - \ol\la$ are also approximate eigensequences for $N - \la$. To this end we introduce the Banach space
$$
\wt\calH := \ell^\infty(\calH)/c_0(\calH),
$$
where by $\ell^\infty(\calH)$ we denote the space of all bounded sequences $(x_n)$ in $\calH$ with norm $\|(x_n)\|_{\ell^\infty(\calH)} = \sup_n\,\|x_n\|$, and $c_0(\calH)$ is the closed subspace of $\ell^\infty(\calH)$ consisting of the sequences $(x_n)$ with $\lim_n\,\|x_n\| = 0$. It is not difficult to show that the norm of a coset $[(x_n)]\in\wt\calH$ is given by
$$
\|[(x_n)]\|_{\wt\calH} = \limsup_{n\to\infty}\,\|x_n\|.
$$
Consider the operators $\wt N$ and $\wt{N^+}$ in $\wt\calH$, defined by
$$
\wt N[(x_n)] := [(Nx_n)]\quad\text{and}\quad\wt{N^+}[(x_n)] := [(N^+x_n)], \;\;[(x_n)]\in\wt\calH.
$$
The operators $\wt N$ and $\wt{N^+}$ are well-defined and $\wt N,\wt{N^+}\in L(\wt\calH)$ holds where $\|\wt N\|\le\|N\|$ and $\|\wt{N^+}\|\le\|N^+\|$. As $N$ and $N^+$ commute, also $\wt N$ and $\wt{N^+}$ commute.

Observe that if $(x_n)$ is an approximate eigensequence for $N - \la$ then $[(x_n)]\in\ker(\wt N - \la)$. Conversely, if $(x_n)\subset\calH$ with $\|x_n\| = 1$ for $n\in\N$ such that $[(x_n)]\in\ker(\wt N - \la)$, then $(x_n)$ is an approximate eigensequence for $N - \la$. An analogue correspondence holds for $N^+ - \ol\la$ and $\wt{N^+} - \ol\la$. Therefore, we have to show that
$$
\ker(\wt{N^+} - \ol\la)\,\subset\,\ker(\wt N - \la).
$$
To see this, we define the subspace
$$
\calM := \ol{(\wt N - \la)\ker(\wt{N^+} - \ol\la)}.
$$
This subspace is $\wt N$-invariant. We are done if we can show that $\calM = \{0\}$, or equivalently, $\sap(\wt N|\calM) = \emptyset$. Thus, suppose that there exist a sequence $(\wt x_m)\subset\calM$ and $\mu\in\C$ such that
$$
\|\wt x_m\|_{\wt\calH} = 1\quad\text{and}\quad\lim_{m\to\infty}\,\|(\wt N - \mu)\wt x_m\|_{\wt\calH} = 0.
$$
For each $m\in\N$ there exists a sequence $(x_n^{(m)})\in\ell^\infty(\calH)$ such that $\wt x_m = [(x_n^{(m)})]$. Let $m\in\N$. As $[(x_n^{(m)})]\in\calM$, there exists $[(u_n^{(m)})]\in\ker(\wt{N^+} - \ol\la)$ such that $[(x_n^{(m)})] - (\wt N  - \la)[(u_n^{(m)})]\to 0$ as $m\to\infty$ in $\wt\calH$. Hence, the following holds:
\begin{enumerate}
\item[(a)] $\lim_{m\to\infty}\,\limsup_{n\to\infty}\,\|x_n^{(m)} - (N - \la)u_n^{(m)}\| = 0$,
\item[(b)] $\forall m\in\N : \lim_{n\to\infty}\,\|(N^+ - \ol\la)u_n^{(m)}\| = 0$,
\item[(c)] $\forall m\in\N : \limsup_{n\to\infty}\,\|x_n^{(m)}\| = 1$,
\item[(d)] $\lim_{m\to\infty}\,\limsup_{n\to\infty}\,\|(N - \mu)x_n^{(m)}\| = 0$.
\end{enumerate}
It is not difficult to see that from (a)--(d) it follows that for each $k\in\N$ there exist $m_k,n_k\in\N$ such that
\begin{enumerate}
\item[(a')] $\|x_{n_k}^{(m_k)} - (N - \la)u_{n_k}^{(m_k)}\| < \frac 1 k$,
\item[(b')] $\|(N^+ - \ol\la)u_{n_k}^{(m_k)}\| < \frac 1 k$,
\item[(c')] $\frac 1 2\le\|x_{n_k}^{(m_k)}\|\le 2$,
\item[(d')] $\|(N - \mu)x_{n_k}^{(m_k)}\| < \frac 1 k$.
\end{enumerate}
Set $x_k := x_{n_k}^{(m_k)}$ and $u_k := u_{n_k}^{(m_k)}$. From (c') and (d') we conclude $\mu\in\sap(N)$ and hence $\mu\in\sp(N)$. Consequently,
$$
\liminf_{k\to\infty}\,[x_k,x_k] > 0.
$$
On the other hand, we have
\begin{align*}
\big|[x_k,x_k]\big|
&\le \big|[x_k - (N - \la)u_k,x_k]\big| + \big|[(N - \la)u_k,x_k - (N - \la)u_k]\big|\\
&\qquad\quad + \big|[(N - \la)u_k,(N - \la)u_k]\big|\\
&\le \frac 2 k + \frac 1 k\,\|(N - \la)u_k\| + \big|[(N^+ - \ol\la)u_k,(N^+ - \ol\la)u_k]\big|\\
&\le \frac 2 k + \frac 1 k\left(\frac 1 k + \|x_k\|\right) + \frac 1 {k^2}\le\frac 6 k\,,
\end{align*}
which is a contradiction.
\end{proof}

\begin{rem}
Note that in Theorem \ref{t:spectral_set} it is only assumed that $\sigma$ is of positive type and not of two-sided positive type with respect to $N$.
\end{rem}

In what follows we derive some direct consequences of Theorem \ref{t:spectral_set} (see also Lemma \ref{l:basic2} and Corollary \ref{c:res}).

\begin{cor}\label{c:1st}
If $\sigma$ is a spectral set of $N$ which is of positive type with respect to $N$, then $\sigma$ is of two-sided positive type with respect to $N$. In particular, if $\la\in\sp(N)$ is an isolated point of $\sigma(N)$, then $\la\in\spp(N)$, and $\la$ is a pole of order one of the resolvent of $N$.
\end{cor}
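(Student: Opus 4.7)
My plan is to reduce the statement to a Hilbert-space fact on the Riesz-Dunford subspace $Q\calH$. By Theorem~\ref{t:spectral_set} the projection $Q$ is selfadjoint, $Q\calH$ is uniformly positive with some constant $\delta > 0$, and $N|Q\calH$ is a normal operator in the Hilbert space $(Q\calH,\product)$. Standard Riesz-Dunford calculus then gives $\sigma(N|Q\calH) = \sigma$ and $\sigma(N|(I-Q)\calH) = \sigma(N)\setminus\sigma$, and passing to the Krein-space adjoint yields $\sigma(N^+|Q\calH) = \sigma^*$ and $\sigma(N^+|(I-Q)\calH) = (\sigma(N)\setminus\sigma)^*$.

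By Remark~\ref{r:set_tspt}, to conclude that $\sigma$ is of two-sided positive type it suffices to prove the inclusion $\sigma^*\cap\sap(N^+)\subset\sp(N^+)$, the other inclusion being the hypothesis. Fix $\ol\la\in\sigma^*\cap\sap(N^+)$ together with an approximate eigensequence $(x_n)$ for $N^+-\ol\la$. I would split $x_n = y_n + z_n$ with $y_n := Qx_n$ and $z_n := (I-Q)x_n$. Since $\ol\la\notin (\sigma(N)\setminus\sigma)^* = \sigma(N^+|(I-Q)\calH)$, the restriction $(N^+-\ol\la)|(I-Q)\calH$ is boundedly invertible; applying $I-Q$ to $(N^+-\ol\la)x_n\to 0$ (using that $Q$ commutes with $N^+$) therefore forces $z_n\to 0$, so $\|y_n\|\to 1$. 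Selfadjointness of $Q$ makes the splitting $\product$-orthogonal, and uniform positivity of $Q\calH$ yields
$$
[x_n,x_n] \;=\; [y_n,y_n] + [z_n,z_n] \;\ge\; \delta\|y_n\|^2 - \|z_n\|^2,
$$
from which $\liminf_{n\to\infty}[x_n,x_n]\ge\delta>0$, and hence $\ol\la\in\sp(N^+)$.

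The \emph{in particular} clause would follow immediately: if $\la\in\sp(N)$ is an isolated point of $\sigma(N)$, then $\{\la\}$ is both open and closed in $\sigma(N)$, i.e.\ a spectral set, and the hypothesis $\{\la\}\cap\sap(N)\subset\sp(N)$ is satisfied, so the first half of the corollary delivers $\la\in\spp(N)$. Because $\la$ is isolated in $\sigma(N)$ it is an accumulation point of $\rho(N)$, and Corollary~\ref{c:res} then gives the pole-of-order-one property.

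I do not foresee a serious obstacle: Theorem~\ref{t:spectral_set} does the heavy lifting by producing a uniformly positive Riesz subspace on which $N$ is normal in the Hilbert-space sense, and the only ingredient beyond that is the routine observation that the Riesz decomposition separates the spectra of $N^+$ in exactly the way needed to extract an approximate eigensequence living (up to a vanishing remainder) inside $Q\calH$. Verifying that $[x_n,x_n]$ is bounded below then becomes purely a matter of uniform positivity.
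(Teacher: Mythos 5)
Your argument is correct and is exactly the intended one: the paper states Corollary \ref{c:1st} as a direct consequence of Theorem \ref{t:spectral_set}, and the details you supply (selfadjointness and uniform positivity of $Q$, the $\product$-orthogonal splitting of an approximate eigensequence for $N^+-\ol\la$ with the $(I-Q)\calH$-component vanishing because $\ol\la\in\rho(N^+|(I-Q)\calH)$, then Remark \ref{r:set_tspt} and Corollary \ref{c:res}) are precisely the standard reduction the author has in mind. No gaps.
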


\begin{cor}\label{c:finite_dim}
If $\dim\calH < \infty$, then $\sp(N) = \spp(N)$. In particular, if for some $\la\in\sigma(N)$ the inner product $\product$ is positive definite on $\ker(N - \la)$ or on $\ker(N^+ - \ol\la)$, then
$$
\ker(N - \la) = \ker(N^+ - \ol\la) = \calL_\la(N) = \calL_{\ol\la}(N^+).
$$
\end{cor}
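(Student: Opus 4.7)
The plan is to derive Corollary \ref{c:finite_dim} as a direct packaging of Corollary \ref{c:1st} and Lemma \ref{l:basic2}, with finite-dimensional compactness doing the only genuine work.

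First, for the equality $\sp(N) = \spp(N)$: in finite dimensions $\sigma(N)$ is a finite set, so for every $\la \in \sigma(N)$ the singleton $\{\la\}$ is a spectral set (both open and closed in $\sigma(N)$). If $\la \in \sp(N)$, then this spectral set is of positive type with respect to $N$, and Corollary \ref{c:1st} yields $\la \in \spp(N)$. The reverse inclusion $\spp(N) \subset \sp(N)$ is immediate from the definition.

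For the kernel identities, the main task is to show $\la \in \spp(N)$ under either positive-definiteness assumption; the equalities $\ker(N - \la) = \ker(N^+ - \ol\la) = \calL_\la(N) = \calL_{\ol\la}(N^+)$ then follow from Lemma \ref{l:basic2}(ii). Assume first that $\product$ is positive definite on $\ker(N - \la)$. Since this kernel is finite-dimensional, positive definiteness gives a uniform bound $[y,y] \ge \delta \|y\|^2$ for all $y \in \ker(N - \la)$ and some $\delta > 0$. Given any approximate eigensequence $(x_n)$ for $N - \la$, Bolzano-Weierstrass ensures that every subsequence admits a further subsequence $x_{n_k} \to x$ with $\|x\| = 1$ and $(N - \la)x = 0$, whence $[x_{n_k}, x_{n_k}] \to [x,x] \ge \delta$. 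A standard subsequence-extraction argument then forces $\liminf_n [x_n, x_n] \ge \delta > 0$, so $\la \in \sp(N)$, and by the first assertion $\la \in \spp(N)$.

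If instead $\product$ is positive definite on $\ker(N^+ - \ol\la)$, apply the same argument with $N$ replaced by $N^+$ (also a normal operator in the same finite-dimensional Krein space) to obtain $\ol\la \in \sp(N^+)$, and hence $\ol\la \in \spp(N^+)$ by the first assertion applied to $N^+$. Unpacking the definition of $\spp(N^+)$ gives $\la \in \sp(N)$ and $\ol\la \in \sp(N^+)$, i.e., $\la \in \spp(N)$, and the kernel identities once again follow from Lemma \ref{l:basic2}(ii). There is no real obstacle here beyond this bookkeeping; the only nontrivial step is the $\liminf$ extraction using compactness of the unit sphere of $\calH$, which is routine in finite dimensions.
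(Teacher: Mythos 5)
Your proposal is correct and follows exactly the derivation the paper intends: finiteness of $\sigma(N)$ makes every point an isolated spectral set, so Corollary \ref{c:1st} upgrades $\sp(N)$ to $\spp(N)$, and the kernel identities come from Lemma \ref{l:basic2}(ii) once either definiteness hypothesis is converted into $\la\in\sp(N)$ (or $\ol\la\in\sp(N^+)$) by the routine compactness argument. The paper states this corollary without proof as a direct consequence of Theorem \ref{t:spectral_set}; you have merely filled in the same steps explicitly.
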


\begin{cor}
Let $\calS\subset\C$ be an open set and assume that $N$ has a local spectral function $E$ on $\calS$. Then $\calS$ is of positive type with respect to $N$ if and only if $E$ is a local spectral function of positive type.
\end{cor}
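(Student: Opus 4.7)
The implication that $E$ of positive type forces $\calS$ to be of positive type with respect to $N$ is immediate from Lemma \ref{l:lsf_nec}(a), so the content of the corollary lies in the converse direction. My plan is to fix $\Delta\in\frakB(\calS)$ under the standing assumption $\calS\cap\sap(N)\subset\sp(N)$ and show that $E(\Delta)\calH$ is uniformly positive; condition (S6) then follows.

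The first step is to upgrade $E(\Delta)$ from a bounded projection (all that Definition \ref{d:lsf_pt} requires a priori) to a $\product$-selfadjoint one. Since $N$ is normal, $N^+$ commutes with $N$, and (S3) forces $E(\Delta)$ to commute with both $N$ and $N^+$; in particular $E(\Delta)\calH$ is $N$-invariant. Using (S4) together with the fact that $\ol\Delta\subset\calS$ is built into the definition of $\frakB(\calS)$, I expect the chain
$$
\sap(N|E(\Delta)\calH)\,\subset\,\sap(N)\cap\sigma(N|E(\Delta)\calH)\,\subset\,\sap(N)\cap\calS\,\subset\,\sp(N).
$$
This is exactly condition (a) of Lemma \ref{l:proj}, which then yields $E(\Delta)=E(\Delta)^+$.

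Once $E(\Delta)$ is $\product$-selfadjoint, $E(\Delta)\calH$ is a Krein space with respect to the inherited inner product, and $N_\Delta:=N|E(\Delta)\calH$ is a bounded normal operator on it (with Krein-space adjoint $N^+|E(\Delta)\calH$). The inclusion from the previous paragraph, read inside this restricted Krein space, becomes $\sap(N_\Delta)\subset\sp(N_\Delta)$, since an approximate eigensequence in $E(\Delta)\calH$ serves verbatim as one in $\calH$ and the inner product is unchanged. I would then apply Theorem \ref{t:spectral_set} to $N_\Delta$ with the trivial spectral set $\sigma:=\sigma(N_\Delta)$: the corresponding Riesz-Dunford projection is the identity on $E(\Delta)\calH$, so the theorem concludes that the whole space $E(\Delta)\calH$ is uniformly positive, as required.

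The only non-routine point I anticipate is the first step, namely the $\product$-selfadjointness of $E(\Delta)$; thereafter the argument is a direct invocation of Theorem \ref{t:spectral_set} on the restricted Krein space, with no growth conditions on $\Re N$ or $\Im N$ needed thanks to the strength of that theorem.
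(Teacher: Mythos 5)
Your argument is correct and follows essentially the same route as the paper's own proof: selfadjointness of $E(\Delta)$ via condition (a) of Lemma \ref{l:proj} (justified through (S4) and $\ol\Delta\subset\calS$), followed by an application of Theorem \ref{t:spectral_set} to the normal operator $N|E(\Delta)\calH$ in the Krein space $(E(\Delta)\calH,\product)$ with the full spectrum as the spectral set. The forward direction via Lemma \ref{l:lsf_nec} also matches the paper.
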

\begin{proof}
If $E$ is of positive type, then $\calS$ is of positive type with respect to $N$ by Lemma \ref{l:lsf_nec}. Conversely, assume that $\calS$ is of positive type with respect to $N$. Let $\Delta\in\mathfrak B(\calS)$. Then from Lemma \ref{l:proj} we conclude that $Q := E(\Delta)$ is selfadjoint. It remains to show that $(Q\calH,\product)$ is a Hilbert space. As $N^+$ commutes with $Q$, it follows that $N|Q\calH$ is a normal operator in the Krein space $(Q\calH,\product)$ with $(N|Q\calH)^+ = N^+|Q\calH$. The assertion is now a consequence of $\sap(N|Q\calH)\subset\sp(N|Q\calH)$ and Theorem \ref{t:spectral_set}.
\end{proof}

A bounded operator $T$ in $(\calH,\product)$ is said to be {\it fundamentally reducible} if there exists a fundamental decomposition $\calH = \calH_+[\ds]\calH_-$ of $\calH$ such that both $\calH_+$ and $\calH_-$ are $T$-invariant. Note that a fundamentally reducible {\it normal} operator is always normal in a Hilbert space. A fundamentally reducible operator $T$ is called {\it strongly stable} if
$$
\sigma(T|\calH_+)\cap\sigma(T|\calH_-) = \emptyset,
$$
cf.\ \cite{b98}. The following corollary was already proved in \cite{pst} under the additional assumption that $\sigma(\Im N)\subset\R$ and that a growth condition \eqref{e:growth} on the resolvent of $\Im N$ holds near $\R$. Here, it immediately follows from \cite[Theorem 1]{b98} and Theorem \ref{t:spectral_set}.

\begin{cor}\label{c:strongly_stable}
The following statements are equivalent.
\begin{enumerate}
\item[{\rm (i)}]   $N$ is strongly stable.
\item[{\rm (ii)}]  There exists $\delta > 0$ such that every normal operator $X$ with $\|X - N\| < \delta$ is fundamentally reducible.
\item[{\rm (iii)}] $\sigma(N) = \sp(N)\cup\sm(N)$.
\end{enumerate}
\end{cor}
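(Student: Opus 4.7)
The plan is to reduce the three-way equivalence to \cite[Theorem 1]{b98}, which characterises strong stability of a \emph{fundamentally reducible} normal operator via stability under small normal perturbations. The role of Theorem \ref{t:spectral_set} is precisely to provide fundamental reducibility without any a priori assumption, thereby making the cited perturbation theorem applicable in our setting.

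First I would prove (iii) $\Rightarrow$ (i). Under the hypothesis $\sigma(N) = \sp(N)\cup\sm(N)$, Lemma \ref{l:sp_open} shows that both $\sp(N)$ and $\sm(N)$ are open in $\sap(N) = \sigma(N)$; since they partition $\sigma(N)$, each is also relatively closed, hence a spectral set for $N$. Let $Q_\pm$ denote the corresponding Riesz-Dunford projections. Theorem \ref{t:spectral_set} (and its negative-type counterpart) yields that $Q_\pm$ are selfadjoint, that $Q_+\calH$ is uniformly positive, and that $Q_-\calH$ is uniformly negative. Because $Q_+ + Q_- = I$, we obtain a fundamental decomposition $\calH = Q_+\calH\,[\ds]\,Q_-\calH$ whose summands are $N$-invariant and carry parts of $N$ with disjoint spectra $\sp(N)$ and $\sm(N)$; that is, $N$ is strongly stable.

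The converse (i) $\Rightarrow$ (iii) is an elementary Hilbert space argument: if $\calH = \calH_+[\ds]\calH_-$ is a fundamental decomposition reducing $N$ with disjoint spectra, then each restriction $N|\calH_\pm$ is normal in a Hilbert space, so $\sigma(N|\calH_+) = \sap(N|\calH_+) \subset \sp(N)$ and, analogously, $\sigma(N|\calH_-) \subset \sm(N)$. The disjointness assumption then gives $\sigma(N) = \sigma(N|\calH_+)\cup\sigma(N|\calH_-) \subset \sp(N)\cup\sm(N)$.

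Finally, the equivalence (i) $\Leftrightarrow$ (ii) is \cite[Theorem 1]{b98}, applied to $N$ itself: under (i), $N$ is fundamentally reducible by definition; under (ii), taking $X = N$ shows that $N$ is fundamentally reducible as well, so the cited theorem applies in both directions. Since no step requires a detailed computation, the main obstacle is conceptual rather than technical: removing the resolvent growth hypotheses on $\Im N$ used in \cite{pst}, which is exactly what Theorem \ref{t:spectral_set} accomplishes.
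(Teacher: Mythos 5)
Your proposal is correct and follows essentially the same route as the paper, whose proof is simply the observation that the corollary follows from \cite[Theorem 1]{b98} together with Theorem \ref{t:spectral_set}; you have merely filled in the glue (the clopen partition $\sigma(N)=\sp(N)\cup\sm(N)$ giving selfadjoint Riesz--Dunford projections with uniformly definite ranges, and the bootstrap that makes the cited perturbation theorem applicable in both directions of (i)$\Leftrightarrow$(ii)). The details you supply, including the use of the negative-type counterpart of Theorem \ref{t:spectral_set} and the role of spectral disjointness in (i)$\Rightarrow$(iii), are sound.
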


In view of Corollary \ref{c:1st} the question arises whether the sets $\sp(N)$ and $\spp(N)$ possibly even coincide. We cannot give a definite answer to this question here. However, the following proposition shows that a possible counterexample can only be found in an infinite-dimensional Krein space which is not a Pontryagin space.

\begin{prop}\label{p:pont}
If $(\calH,\product)$ is a Pontryagin space, then we have $\sp(N) = \spp(N)$.
\end{prop}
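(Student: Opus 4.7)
The inclusion $\spp(N)\subset\sp(N)$ is immediate from the definitions, so the content is $\sp(N)\subset\spp(N)$. I would argue by contradiction. Fix $\la\in\sp(N)$; by Lemma \ref{l:basic}(ii), $\ol\la\in\sap(N^+)$. Suppose $\ol\la\notin\sp(N^+)$: then there is $(y_n)\subset\calH$ with $\|y_n\|=1$, $(N^+-\ol\la)y_n\to 0$, and, after a subsequence, $[y_n,y_n]\to c$ for some $c\le 0$.

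The Pontryagin hypothesis enters through compactness of the finite-dimensional negative part. Fix a fundamental decomposition $\calH = \calH_+\,[\ds]\,\calH_-$ with $\dim\calH_-=\kappa<\infty$ and associated orthogonal projections $P_\pm$. Since $P_-$ has finite rank I pass to a further subsequence so that $P_-y_n\to P_-y$ in norm and $y_n\rightharpoonup y$ weakly. From $\|y_n\|^2 = \|P_+y_n\|^2 + \|P_-y_n\|^2 = 1$ and $[y_n,y_n] = \|P_+y_n\|^2 - \|P_-y_n\|^2 \to c$ one computes $\|P_-y\|^2 = (1-c)/2 \ge 1/2$, so $y\neq 0$. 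Passing to the weak limit, $y\in\calM:=\ker(N^+-\ol\la)$, and weak lower semicontinuity of $\|P_+\cdot\|$ gives $[y,y]\le c\le 0$.

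I would then analyze $\calM$, which is closed and invariant under both $N$ and $N^+$, with $N^+|_\calM=\ol\la I_\calM$. The pivotal observation is that every closed, $N$- and $N^+$-invariant, non-degenerate subspace $\calK\subset\calM$ is contained in $\ker(N-\la)$: for $x,v\in\calK$ one has $[(N-\la)x,v] = [x,(N^+-\ol\la)v] = 0$, and non-degeneracy of $\calK$ then forces $N|_\calK=\la I_\calK$. Since $\ker(N-\la)\subset\calM$ (Lemma \ref{l:basic}(iii)) is uniformly positive, there is the decomposition $\calM = \ker(N-\la)\,[\ds]\,\calM_1$ with $\calM_1:=\ker(N-\la)^\gperp\cap\calM$; if $\calM_1$ were non-degenerate it would lie in $\ker(N-\la)$ and hence be $\{0\}$, making $\calM$ uniformly positive and contradicting $[y,y]\le 0$. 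Thus $\calM^0:=\calM\cap\calM^\gperp$ is a non-zero closed neutral subspace, with $\dim\calM^0\le\kappa$ by the Pontryagin property, and both $N$- and $N^+$-invariant. On the finite-dimensional $\calM^0$, $(N-\la)$ is invertible since $\ker(N-\la)\cap\calM^0=\{0\}$ (uniformly positive meets neutral), so $N|_{\calM^0}$ has an eigenvalue $\nu\neq\la$ with eigenvector $w\in\calM^0\setminus\{0\}$ satisfying $Nw=\nu w$, $N^+w=\ol\la w$, $[w,w]=0$.

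The main obstacle is the final contradiction from this mismatched common eigenvector. I would extract it by embedding $w$ in a finite-dimensional closed Krein subspace $\calF\subset\calH$ invariant under both $N$ and $N^+$; such an $\calF$ is available in the Pontryagin setting via the Langer--Schenke spectral theory \cite{ls} at the critical eigenvalue $\nu$, whose root subspace is finite-dimensional and can be enlarged to a non-degenerate $N$- and $N^+$-invariant $\calF$ by adjoining a finite-dimensional piece of $\ker(N-\la)$. Inside $\calF$ one has $\la\in\sigma(N|_\calF)$ (as $w\in\ker(N^+|_\calF-\ol\la)$ forces $\ol\la\in\sigma_p(N^+|_\calF)$), and $\ker(N|_\calF-\la)\subset\ker(N-\la)$ carries a positive definite restriction of $\product$ (possibly trivially). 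Corollary \ref{c:finite_dim} applied to $N|_\calF$ then yields $\ker(N|_\calF-\la) = \ker(N^+|_\calF-\ol\la)$; since $w$ belongs to the right-hand side, this forces $Nw=\la w$, contradicting $Nw=\nu w$ with $\nu\neq\la$.
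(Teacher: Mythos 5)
Your argument is a genuinely different route from the paper's (which decomposes $\calH$ as in \cite{ls} into a finite-dimensional part and a part on which $\Re N$ and $\Im N$ have real spectra, and then quotes \cite[Theorem 4.2]{pst} together with Lemma \ref{l:lsf_nec} and Corollary \ref{c:finite_dim}), and everything up to the production of the vector $w\neq 0$ with $Nw=\nu w$, $N^+w=\ol\la w$, $\nu\neq\la$, $[w,w]=0$ is correct. The gap sits exactly where you flag the main obstacle: the construction of the finite-dimensional non-degenerate $N$- and $N^+$-invariant subspace $\calF$ containing $w$. First, nothing guarantees that $\calL_\nu(N)$ is finite-dimensional. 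Second, and fatally, the $\calF$ you propose is automatically degenerate at $w$: from $N^+w=\ol\la w$ one gets $[w,(N-\nu)v]=(\ol\la-\ol\nu)[w,v]$ for every $v$, and iterating this on $(N-\nu)^kv=0$ gives $(\ol\la-\ol\nu)^k[w,v]=0$, hence $[w,v]=0$ for all $v\in\calL_\nu(N)$; likewise $(\nu-\la)[w,x]=[Nw,x]-\la[w,x]=[w,(N^+-\ol\la)x]=0$ for all $x\in\ker(N-\la)$. Thus $w$ is $\product$-orthogonal to $\calL_\nu(N)+\ker(N-\la)$ and to itself, so adjoining a piece of $\ker(N-\la)$ can never make the span non-degenerate, and Corollary \ref{c:finite_dim} cannot be applied to $N|\calF$ as you intend.

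The step can be repaired, but with a different $\calF$. Since $\Re N\,w=\tfrac12(\nu+\ol\la)w$ and $\Im N\,w=\tfrac1{2i}(\nu-\ol\la)w$, and since $\nu+\ol\la\in\R$ together with $\tfrac1{i}(\nu-\ol\la)\in\R$ would force $\nu=\la$, the vector $w$ is an eigenvector of one of the selfadjoint operators $\Re N$, $\Im N$ at a non-real eigenvalue $\mu_0$. In a Pontryagin space $\mu_0$ is a normal eigenvalue, and $\calF:=\calL_{\mu_0}(\Re N)\ds\calL_{\ol{\mu_0}}(\Re N)$ (or the analogue for $\Im N$) is finite-dimensional, non-degenerate — the Riesz projection onto the conjugate-symmetric spectral set $\{\mu_0,\ol{\mu_0}\}$ of a selfadjoint operator is $\product$-selfadjoint — and invariant under every operator commuting with $\Re N$, in particular under $N$ and $N^+$. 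With this $\calF$ your concluding argument does work: $\ol\la\in\sigma_p(N^+|\calF)$ forces $\la\in\sigma_p(N|\calF)$, the nonzero subspace $\ker(N|\calF-\la)\subset\ker(N-\la)$ is positive definite, and Corollary \ref{c:finite_dim} gives $w\in\ker(N^+|\calF-\ol\la)=\ker(N|\calF-\la)$, contradicting $Nw=\nu w$ with $\nu\neq\la$. As written, however, the proof is incomplete at its decisive step.
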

\begin{proof}
It is easy to see (see also \cite{ls}) that the Pontryagin space $(\calH,\product)$ can be decomposed into a direct orthogonal sum $\calH = \calH_1\,[\ds]\,\calH_2$ with closed $N$- and $N^+$-invariant subspaces $\calH_1$ and $\calH_2$ such that $\dim\calH_1 < \infty$ and the operators $\Re N|\calH_2$ and $\Im N|\calH_2$ have real spectra. Set $N_j := N|\calH_j$, $j = 1,2$. Owing to the properties of selfadjoint operators in Pontryagin spaces and \cite[Theorem 4.2]{pst} the operator $N_2$ has a local spectral function of positive type on neighborhoods of spectral points of positive type of $N_2$. The assertion now follows from Lemma \ref{l:lsf_nec} and Corollary \ref{c:finite_dim}.
\end{proof}

\section{Concluding remarks}
A selfadjoint operator in a Krein space has a local spectral function of positive type on sets which are of positive type. We proved that the same holds for normal operators if and only if the set is of two-sided positive type. The question whether the notions "positive type" and "two-sided positive type" are in fact equivalent has to be left open, but the answer is "yes" if the set is a spectral set or if the Krein space is a Pontryagin space.

\section*{Acknowledgements}
The author gratefully acknowledges the support of the Deutsche For\-schungs\-gemeinschaft (DFG) under grant BE 3765/5-1.


\end{document}